\documentclass[11pt]{article}
\usepackage{latexsym}
\usepackage{mathrsfs}
\usepackage{amsmath,amsthm}
\usepackage{graphicx}
\usepackage{amssymb}
\usepackage{CJK}
\usepackage{color}
\newtheorem{thm}{Theorem}[section]
\newtheorem{cor}[thm]{Corollary}
\newtheorem{lem}[thm]{Lemma}
\newtheorem{prop}[thm]{Proposition}

\newtheorem{rem}[thm]{Remark}

\newtheorem{obs}{Observation}
\DeclareGraphicsExtensions{.eps,.eps.gz}
\DeclareGraphicsRule{*}{eps}{*}{}
\oddsidemargin=0 cm
\topmargin=0 cm
\textwidth=16 true cm \textheight=23 true cm
\normalsize \rm
\parindent=16pt
\DeclareGraphicsRule{*}{eps}{*}{}  \linespread{1.2}


\usepackage[centerlast]{caption}

\captionsetup{labelsep=period, tablewithin=section}

\title{The fullerenes with a perfect star packing\footnote{
This work was supported in part by the National Natural Science Foundation of China (grant no. 11901458 and 11871256) and by the Fundamental Research Funds for the Central Universities (grant no. D5000200199).}}
\author{Ling-Juan Shi\thanks{Corresponding author.}}
\date{\small School of Software, Northwestern Polytechnical University, \\Xi'an, Shaanxi 710072, P. R. China\\
E-mails: shilj18@nwpu.edu.cn.}

\begin{document}
\maketitle
\begin{abstract}
 A spanning subgraph of a graph $G$ is called a perfect star packing in $G$ if every component of the spanning subgraph is isomorphic to the star graph $K_{1,3}$.
An efficient dominating set of graph $G$ is a vertex subset $D$ of $G$ such that each vertex of $G$ not in $D$ is adjacent to exactly one vertex from $D$ and any two vertices of $D$ are not adjacent in $G$.
Fullerene graph is a connected plane cubic graph with only pentagonal and hexagonal faces, which is the molecular graph of carbon fullerene.
Clearly, a perfect star packing in a fullerene graph $G$ on $n$ vertices will exist if and only if $G$ has an efficient dominating set of cardinality $\frac{n}{4}$. The problem of finding an efficient dominating set is algorithmically hard \cite{Alg_hard}. In this paper, we give a characterization for a fullerene graph to own a perfect star packing. And mainly
show that it is necessary for a fullerene $G$ owning a  perfect star packing to have order being divisible by $8$. This answers an open problem asked by Dosli\'{c} et. al. and also shows that a fullerene graph with an efficient dominating set has $8n$ vertices.
By the way, we find some counterexamples for the necessity of Theorem $14$ in \cite{Doslic} and list some forbidden configurations to preclude the existence of a perfect star packing of type $P0$.

\textbf{Keywords:} Fullerene graph; Perfect star packing;  Efficient dominating set
\end{abstract}
\baselineskip=0.3in

\section{Introduction}
A chemical graph is a simple finite graph in which vertices denote the atoms and edges denote the chemical bonds in underlying chemical structure.
Perfect matchings of a chemical graph correspond to kekul\'{e} structures of the molecule, which feature in the calculation of molecular energies associated with benzenoid hydrocarbon molecules \cite{pm_molecular_energy}.
Alternating sextet faces (sextet patterns) also play a meaningful role in the prediction of molecular stability, in particular, but not only, in benzenoid compounds.
Although for fullerenes, the two structures do not play the same role as in benzenoid compounds, they received numerous attention in recent years, see  \cite{ clar_fullerene, pm_stability, resonant_hexagons_fullerene, pms_lowbound, maxiclar_fullerene, clarstructure_F, clar_upbound_F, pms_n_lowbound} etc.

A perfect matching in a graph $G$ may be viewed as a collection of subgraphs of $G$, each of which is isomorphic to $K_2$, whose vertex sets partition the vertex set of $G$. This is naturally generalized by replacing $K_2$ by an arbitrary graph $H$.
For a given graph $H$, an \emph{$H$-packing} of $G$ is the set of some vertex disjoint subgraphs, each of which is isomorphic to $H$. From the optimization point of view, the maximum $H$-packing problem is to find the maximum number of vertex disjoint copies of $H$ in $G$ called the \emph{packing number}. An $H$-packing in $G$ is called \emph{perfect} if it covers all the vertices of $G$.
If $H$ is isomorphic to $K_2$, the maximum (perfect) $H$-packing problem becomes the familiar maximum (perfect) matching problem.
If $H$ is the cycle $C_6$ of length $6$, for a fullerene or a hexagonal system $G$, the packing number is related to the Clar number (the maximum number of mutually disjoint sextet patterns) of $G$.
If $H$ is the star graph $K_{1,3}$, it is the maximum star packing problem. If a $K_{1,3}$-packing covers all the vertices of $G$, we call it being a \emph{perfect star packing}.
For a given family $\mathcal{F}$ of graphs, an $H$-packing concept can also be generalized to an $\mathcal{F}$-packing (we refer the reader to \cite{family_packing} for the definition).

Packing in graphs is an effective tool as it has lots of applications in applied sciences.
$H$-Packing, is of practical interest in the areas of scheduling \cite{scheduling}, wireless sensor tracking \cite{wirelesssensortracking}, wiring-board design, code optimization \cite{codeoptimization} and many others.
Packing problems were already studied for Carbon Nanotubes \cite{packing_carbon_nanotube}.
Packing lines in a hypercube have been studied in \cite{Packing_lines}. $H$-packing was determined for honeycomb \cite{family_packing} and hexagonal network \cite{hexagonal_network}. For representing chemical compounds or to problems of pattern recognition and image processing, $P_3$-packing has some applications in chemistry \cite{P3_packing}.
Dosli\'{c} et. al \cite{Doslic} have investigated which fullerene graphs allow perfect star packings, and have considered generalized fullerene graphs and packings of other graphs into classical and generalized fullerenes. They also listed several open problems.

In the following section we introduce necessary preliminaries and
characterize the classical fullerenes which have a perfect star packing. Section $3$
gives a negative answer to the problem ``Is there a fullerene on $8n+4$ vertices with a perfect star packing?" asked by Dosli\'{c} et. al \cite{Doslic}.
This implies that a fullerene graph with an efficient dominating set must has $8n$ vertices.
In section $4$, we generalize the Proposition $1$ in reference \cite{Doslic} and give three counterexamples for Theorem $14$  in the same paper.
And some forbidden configurations are listed to preclude the existence of a perfect star packing of type $P0$.

\section{Characterization of fullerenes with a perfect star packing}
A \emph{fullerene} graph (simply fullerene) is a cubic $3$-connected plane graph with only pentagonal and hexagonal faces.  It follows from the Euler formula that there must be exactly $12$ pentagons in every fullerene graph.
Such graphs are suitable models for carbon fullerene molecules: carbon atoms are
represented by vertices, whereas edges represent chemical bonds between two atoms (see \cite{fowler1995, LZ18}).
In a classical paper by Gr\"{u}nbaum and Motzkin \cite{Gru}, we know that a fullerene graph with $n$ vertices exists for all even $n\geq20$ except for $n=22$.
 Klein and Liu \cite{Klein_Liu} used a similar approach to show that there exist fullerene graphs on $n$ vertices with isolated pentagons for $n=60$ and for each even $n\geq70$.
 We refer the reader to the monograph \cite{fowler1995} for a systematic introduction on fullerene graphs.

A cycle of a fullerene graph $G$ is a \emph{facial cycle} if it is the boundary of a face in $G$, otherwise, it is a \emph{non-facial} cycle.
Clearly, each pentagon and hexagon in $G$ is a facial cycle since $G$ is $3$-connected and any $3$-edge-cut is trivial \cite{af_fullerene}.
In paper \cite{Doslic}, the authors obtained the following basic conclusions.
\begin{prop}[\cite{Doslic}]\label{1-pentagon}
Let $S$ be a perfect star packing of fullerene graph $G$. Then each pentagon of $G$ can contain at most one center of a star in $S$.
\end{prop}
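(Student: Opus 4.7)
The plan is a short proof by contradiction that exploits only two facts: that $G$ is cubic, and that the stars of $S$ partition $V(G)$. The first step I would record as a standalone observation. Since each star center $c$ in $S$ has three neighbors in $G$ and already has three leaves in its own $K_{1,3}$, the set $N_G(c)$ coincides exactly with the leaf set of the star centered at $c$. Two consequences follow immediately and will do all the work: (i) no two centers of $S$ can be adjacent in $G$, because a center's neighbors are precisely its own leaves; and (ii) a single vertex cannot be a common $G$-neighbor of two distinct centers, since otherwise it would be a leaf of two different stars, violating the partition property of $S$.

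Next I would suppose for contradiction that some pentagon $P = v_1 v_2 v_3 v_4 v_5$ of $G$ contains at least two centers of $S$. By (i) these centers are non-adjacent along $P$, and on a $5$-cycle any two pairwise non-adjacent vertices must sit at cyclic distance exactly $2$; up to relabeling, the two centers are $v_1$ and $v_3$. But then their common pentagon neighbor $v_2$ lies in $N_G(v_1) \cap N_G(v_3)$, directly contradicting (ii). Hence $P$ contains at most one center, as claimed.

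There is essentially no obstacle here: the entire argument reduces to the one-line observation that, in a cubic graph, a star center has no $G$-edge leaving its own star, together with the elementary fact that one cannot place two pairwise non-adjacent vertices on a pentagon without creating a common neighbor. The only point I would state explicitly to keep the argument airtight is the distance-$2$ step on the $5$-cycle; everything else is immediate from the definitions and does not use planarity or the fullerene hypothesis at all.
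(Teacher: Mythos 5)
Your proof is correct. The paper does not reproduce a proof of this proposition (it is quoted from \cite{Doslic}), but your two auxiliary observations are exactly the content of Remark \ref{remark1}, items 1 and 2 (that $C(S)$ is independent, and that every leaf has exactly one neighbor in $C(S)$), and the pentagon step --- two vertices of a $5$-cycle that are non-adjacent on it lie at cyclic distance $2$ and hence share a neighbor on the cycle --- is the standard argument, so your route coincides with the intended one.
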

\begin{lem}[\cite{Doslic}]\label{2-pentagons}
Let $S$ be a perfect star packing of fullerene graph $G$. Then a vertex shared by two pentagons of $G$ cannot be the center of a star in $S$.
\end{lem}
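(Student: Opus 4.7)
The plan is a proof by contradiction. Suppose $v$ is shared by two pentagons $P_1$ and $P_2$ of $G$ and is the center of a star in $S$. Because $G$ is cubic, any two of the three faces meeting at $v$ must share an edge incident to $v$, so $P_1$ and $P_2$ share an edge $vv_1$. Label $P_1=vv_1v_2v_3v_4$ and $P_2=vv_1u_1u_2u_3$ cyclically; the three leaves of the star centered at $v$ are then $v_1$, $v_4$ and $u_3$.

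The first step is to pin down the centers of the stars containing $v_2$ and $u_1$. Applying Proposition~\ref{1-pentagon} to $P_1$, the vertex $v$ is the unique center in $P_1$, so $v_3$ is not a center; together with the fact that $v_1$ is already a leaf of $v$'s star (hence not a center), no neighbor of $v_2$ inside $P_1$ can be its center, forcing the center of $v_2$'s star to be its third neighbor $v_2'\notin P_1$. A symmetric argument applied to $P_2$ forces the center of $u_1$'s star to be $u_1'$, the third neighbor of $u_1$ outside $P_2$.

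The main step is to analyse the third face $F$ at $v_1$. Since the edge $v_1v_2$ is shared between $P_1$ and $F$ and the edge $v_2v_3$ lies in $P_1$, the face $F$ must use the remaining edge $v_2v_2'$ at $v_2$; analogously $F$ uses $u_1u_1'$ at $u_1$. Hence $F$ contains the consecutive vertices $v_2',v_2,v_1,u_1,u_1'$, and $v_2'\neq u_1'$, for equality would make $F$ a $4$-gon, impossible in the fullerene $G$. If $F$ is a pentagon, these five vertices exhaust $F$ and it contains the two distinct centers $v_2'$ and $u_1'$, contradicting Proposition~\ref{1-pentagon}. If $F$ is a hexagon, it has a sixth vertex $y$ adjacent to both centers; since every neighbor of a center in $S$ is a leaf of that center's star, $y$ would belong simultaneously to the star at $v_2'$ and the star at $u_1'$, contradicting that $S$ is a packing. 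The main obstacle is this hexagonal subcase, since Proposition~\ref{1-pentagon} does not rule out two centers in a hexagon; it is resolved by exhibiting the common neighbor $y$ of $v_2'$ and $u_1'$ in $F$ and invoking the vertex-disjointness of $S$.
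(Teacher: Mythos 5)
The paper does not prove this lemma---it imports it from \cite{Doslic} without proof---so there is no in-paper argument to compare against. Your proof is correct and self-contained: the reduction to the third face $F$ at the shared neighbor $v_1$, the forced centers $v_2'$ and $u_1'$ on $F$, and the two-case contradiction (two centers on a pentagon violating Proposition~\ref{1-pentagon}, or a common neighbor $y$ lying in two distinct stars of the packing when $F$ is a hexagon) are all sound, using only the cubicity of $G$ and the fact that a star's center together with all three of its neighbors forms the star.
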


Recall that a vertex set $X$ of a graph $G$ is said to be \emph{independent} if any two vertices in $X$ are not adjacent in $G$. A cycle $C=v_1v_2\cdots v_kv_1$ in $G$ is called \emph{induced} if $v_i$ has only two adjacent vertices $v_{i+1}$ and $v_{i-1}$ around the $k$ vertices $v_1, v_2, \cdots, v_k$ (note that if $i=k, i+1:=1$ and if $i=1, i-1:=k$). Otherwise, there exists some $i$ and $j\notin \{i-1, i+1\}$ such that $v_i$ and $v_j$ are adjacent in $G$, the edge $v_iv_j$ is a \emph{chord} of $C$ and $C$ is not induced.
A subgraph $R$ of a graph $G$ is \emph{spanning} if $R$ covers all the vertices of $G$.
For a vertex $v$ of a graph $G$, we call vertex $u$ being a \emph{neighbor} of $v$ in $G$ if $u$ is adjacent to $v$ in $G$.
\begin{thm}\label{characterization}
Let $G$ be a fullerene graph.
Then $G$ has a perfect star packing if and only if $G$ has an independent vertex set $S^*$ such that each component of
 $G-S^*$ is an induced cycle in $G$.
\end{thm}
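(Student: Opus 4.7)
The plan is a direct two-sided argument based on the fact that $G$ is cubic, so each vertex has exactly three neighbors to account for.

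For the forward direction, suppose $G$ admits a perfect star packing $\mathcal{S}$. Let $S^*$ be the set of centers of the stars in $\mathcal{S}$. Since every vertex lies in exactly one star and every center is cubic with its three neighbors serving as the leaves of its own star, no two centers can be adjacent (an adjacency would force one center to be a leaf of the other star). Hence $S^*$ is independent. Next, I would observe that every leaf vertex uses exactly one of its three incident edges to reach its center in $S^*$, so in $G-S^*$ every remaining vertex has degree $3-1=2$. Therefore $G-S^*$ is a disjoint union of cycles. To upgrade each component $C$ to an induced cycle of $G$, note that a chord of $C$ would supply a third neighbor of some vertex of $C$ inside $G-S^*$, contradicting that $G-S^*$ is $2$-regular; so $C$ is induced in $G-S^*$ and, since the only additional edges from vertices of $C$ go into $S^*$, it is induced in $G$ as well.

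For the backward direction, assume $S^*$ is an independent set of $G$ such that every component of $G-S^*$ is a cycle that is induced in $G$. I would count edges between $S^*$ and $V(G)\setminus S^*$ in two ways to show that the collection $\{v\}\cup N_G(v)$, for $v\in S^*$, yields a perfect star packing. Because $S^*$ is independent, each $v\in S^*$ sends all $3$ of its edges into $V(G)\setminus S^*$. Because the components of $G-S^*$ are induced cycles in $G$, each vertex $u\in V(G)\setminus S^*$ already has exactly $2$ neighbors in $G-S^*$ (its two cycle-neighbors, with no chords and no edges across components), so the remaining edge incident to $u$ must go to a unique vertex of $S^*$. Thus every non-center vertex is dominated by exactly one center, so the subgraphs $\{v\}\cup N_G(v)$ are vertex-disjoint copies of $K_{1,3}$ whose vertex sets partition $V(G)$.

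The main technical point is really the interplay between cubicity and the induced-cycle condition: once one sees that the non-center degrees drop to $2$ (respectively, that non-center vertices keep exactly one edge into $S^*$), both implications essentially write themselves. The only subtlety worth emphasizing is why $G-S^*$ being a union of cycles \emph{induced in $G$} rules out chords as well as edges between different cycle components; I would handle both at once by noting that any such extra edge would contradict the fixed degree count on $V(G)\setminus S^*$. Propositions~\ref{1-pentagon} and \ref{2-pentagons} are not needed here; they pertain to the geometry of centers relative to pentagons and are only invoked later when applying this characterization to show divisibility by $8$.
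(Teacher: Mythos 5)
Your proposal is correct and follows essentially the same route as the paper: in the forward direction take $S^*$ to be the set of star centers and use cubicity to see that $G-S^*$ is $2$-regular with no chords, and in the backward direction use independence of $S^*$ plus the induced-cycle condition to show each non-center vertex has exactly one neighbor in $S^*$, so the closed neighborhoods of $S^*$ form the packing. You are somewhat more explicit than the paper about why $S^*$ is independent and why the resulting stars are vertex-disjoint, but the underlying degree-counting argument is identical.
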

\begin{proof}
If $G$ has a perfect star packing $S$, then $S$ is a spanning subgraph of $G$ and any component in $S$ is isomorphic to a star graph $K_{1,3}$. Let $S^*$ be the set of all $3$-degree vertices in $S$. Clearly, $S^*$ is an independent vertex set in $G$ and any vertex in $G-S^*$ has degree $2$. So each component of $G-S^*$ is an induced cycle in $G$.

Let $S^*$ be an independent vertex set of $G$ such that each component of $G-S^*$ is an induced cycle in $G$.
Clearly, each vertex in $S^*$ and its three neighbors induce a star graph $K_{1,3}$.
We collect all these star graphs and denote this set by $\mathcal{H}$.
For any vertex $x$ on a cycle $C$ in $G-S^*$, $x$ has exactly one neighbor in $S^*$ since $G$ is $3$-regular and induced cycle $C$ is a component of $G-S^*$.
So $\mathcal{H}$ is a spanning subgraph of $G$ and each component of $\mathcal{H}$ is a star graph $K_{1, 3}$, that is, $\mathcal{H}$ is a perfect star packing of $G$.
\end{proof}
We note that star graph $K_{1,3}$ has exactly one \emph{center} (the vertex of degree $3$) and three leaves. A perfect star packing $S$ of a fullerene graph $G$ is a spanning subgraph of $G$ each component of which is a star graph $K_{1,3}$.
We call each $1$-degree vertex in $S$ being a \emph{leaf}.
In the following, we denote by $C(S)$ the set of all the centers of stars in $S$.
\begin{rem}\label{remark1}
Let $S$  be a perfect star packing of fullerene graph $G$. Then\\
$1.$ $C(S)$ is an independent vertex set in $G$.\\
$2.$ Any leaf in $S$ has exactly one neighbor belonging to $C(S)$ and has exactly two neighbors being leaves in $S$.\\
$3.$ Each cycle in $G-C(S)$ does not have a chord.
\end{rem}

\begin{prop}\label{1-hexagon}
Each hexagon can contain at most two centers of a perfect star packing of fullerene graph $G$. If a hexagon $h$ contains two such centers, then they are antipodal points on the hexagon $h$.
\end{prop}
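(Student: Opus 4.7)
The plan is to leverage Remark \ref{remark1} and the simple combinatorics of independent sets on a $6$-cycle. Let $S$ be a perfect star packing of $G$ and let $h=v_1v_2v_3v_4v_5v_6v_1$ be an arbitrary hexagon. By Remark \ref{remark1}(1), $C(S)$ is independent in $G$, so $C(S)\cap V(h)$ is an independent set of the $6$-cycle $h$. The independence number of $C_6$ is $3$, so a priori $|C(S)\cap V(h)|\in\{0,1,2,3\}$; the task is to eliminate $3$ and, when the value is $2$, to pin down the relative position of the two centers.

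First I would rule out the case $|C(S)\cap V(h)|=3$. Up to rotation the only independent triple in $C_6$ is $\{v_1,v_3,v_5\}$, which forces $v_2,v_4,v_6\notin C(S)$, hence each is a leaf of some star in $S$. But $v_2$ is adjacent on $h$ to both $v_1$ and $v_3$, i.e.\ it has (at least) two neighbors in $C(S)$, contradicting Remark \ref{remark1}(2) which says every leaf has exactly one neighbor in $C(S)$.

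Next I would treat the case $|C(S)\cap V(h)|=2$. Up to rotation, the two independent choices are the ``short diagonal'' pair $\{v_1,v_3\}$ and the antipodal pair $\{v_1,v_4\}$. For $\{v_1,v_3\}$, the hexagon vertex $v_2$ is a leaf adjacent along $h$ to two centers, again contradicting Remark \ref{remark1}(2). The only surviving configuration is the antipodal one $\{v_1,v_4\}$, which is exactly the claim.

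No subtle obstacle is anticipated; the argument is a one-line count on $C_6$ combined with the ``exactly one neighbor in $C(S)$'' property of leaves. The only thing to be careful about is that ``adjacent on $h$'' already witnesses adjacency in $G$, so Remark \ref{remark1}(2) applies without needing any further structural information about the embedding. I would write the proof explicitly for the $\{v_1,v_3\}$ case and then say ``analogously'' for the rotational variants, to keep the exposition short.
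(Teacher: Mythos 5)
Your proof is correct and rests on the same key fact as the paper's, namely Remark \ref{remark1}(2): two centers at distance two on $h$ would give the vertex between them two neighbors in $C(S)$. The paper phrases this as a forward propagation from one assumed center ($v_1\in C(S)$ forces $v_2,v_3,v_5,v_6$ to be leaves, leaving only $v_4$), while you enumerate the independent subsets of $C_6$ and eliminate the non-antipodal ones; the difference is purely organizational.
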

\begin{proof}
Let $h$ be a hexagon in $G$. We denote the six vertices of $h$ by $v_1, v_2, \dots, v_6$ in the clockwise direction.
If vertex $v_1$ is the center of a star $H$ in a perfect star packing $S$ of $G$, then $v_2$ and $v_6$ are two leaves in $H$. Hence both $v_3$ and $v_5$ are leaves in $S$ by Remark \ref{remark1} 2.
Clearly, $v_4$ could be the center of a star in $S$.
Hence $h$ has exactly one center of $S$ or has exactly two centers of $S$ which are antipodal points on $h$.
\end{proof}

\section{The order of fullerenes with a perfect star packing}
To show the main conclusion, we need to prepare as follows.
\begin{figure}[htbp!]
\centering
\includegraphics[height=4.5cm]{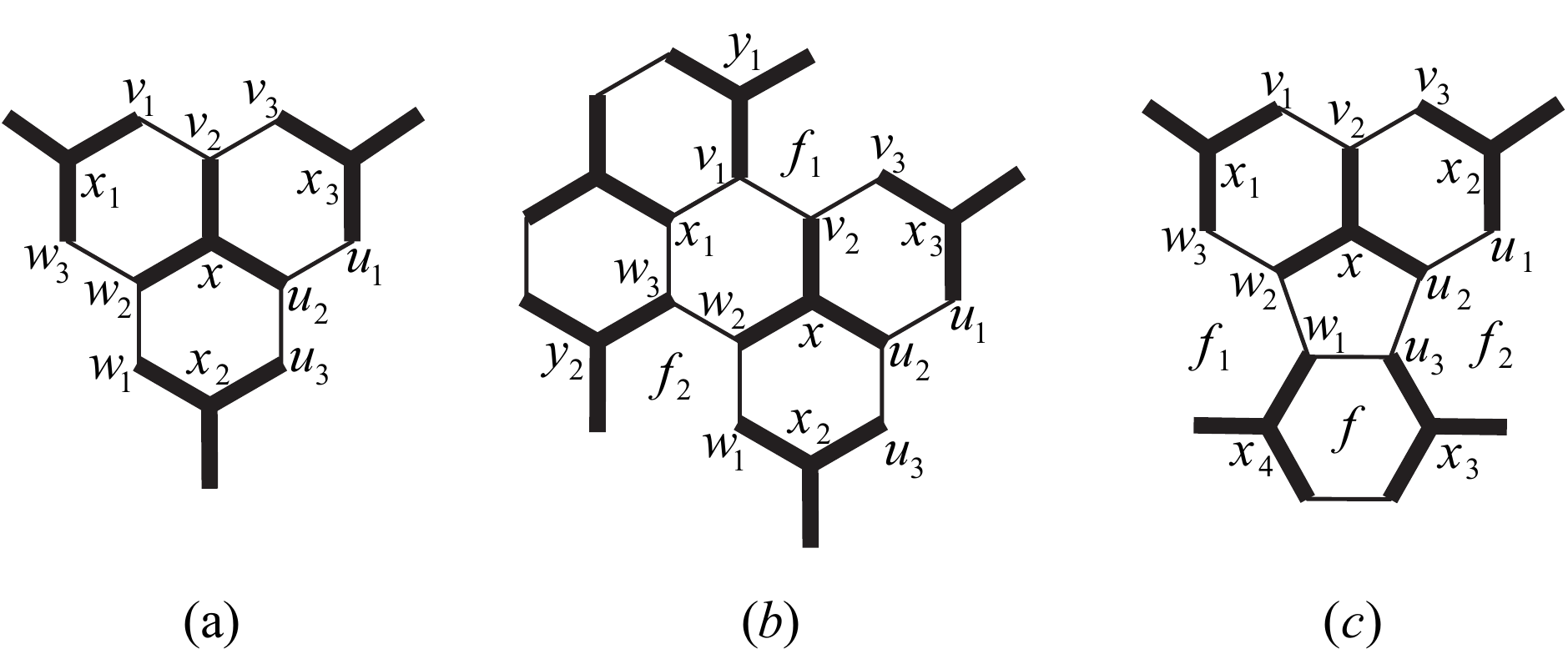}
\caption{\label{structure1}{\small (a) Type $1$; (b) Type $2$; (c) Type $3$.}}
\end{figure}
\begin{lem}\label{structuure-star}
Let $S$  be a perfect star packing of fullerene graph $G$. Then for any vertex $x\in C(S)$, all the vertices on the three faces sharing $x$ are covered by $S$ as Type $1$, Type $2$ or Type $3$ \emph{(}see Fig. \ref{structure1}\emph{)}.
\end{lem}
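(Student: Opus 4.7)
Fix $x\in C(S)$ and let $y_1,y_2,y_3$ be its three neighbors; these are all leaves of the star at $x$. By Remark~\ref{remark1}(2) the two further neighbors of each $y_i$ are leaves as well. Label the three faces meeting at $x$ as $F_1,F_2,F_3$, with $F_i$ bounded through $y_{i-1},x,y_{i+1}$ (indices mod $3$). Lemma~\ref{2-pentagons} says $x$ is incident to at most one pentagon, so the face-type profile at $x$ is either three hexagons or exactly one pentagon and two hexagons.

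Next I would read off the forced status (center vs.\ leaf) of every vertex on each face separately. On a pentagonal face $F=xy_ipqy_j$ at $x$, the vertices $p,q$ are leaves (neighbors of the leaves $y_i,y_j$); since both of $p$'s neighbors on $F$ are leaves, Remark~\ref{remark1}(2) forces the third, outside neighbor of $p$ to lie in $C(S)$, and symmetrically for $q$, so a pentagonal face forces a single rigid local pattern. On a hexagonal face $F=xy_iuwvy_j$ at $x$, the vertices $u,v$ are automatically leaves, while the antipodal vertex $w$ admits exactly two possibilities: either (H1) $w\in C(S)$, whence $u,v$ are among $w$'s leaves and $F$ carries the two antipodal centers permitted by Proposition~\ref{1-hexagon}, or (H2) $w$ is a leaf, whence Remark~\ref{remark1}(2) forces the outside neighbor of each of $u,w,v$ to lie in $C(S)$.

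I would then assemble these face-by-face patterns into a picture of the full neighborhood of $x$, using Remark~\ref{remark1}(1)(3) to rule out any combination producing adjacent centers, a chord in $G-C(S)$, or a double assignment (leaf and center) at a vertex shared by two faces. Identifying configurations equivalent under the $3$-fold rotation about $x$ and the reflection exchanging two faces, the surviving combinations collapse to exactly the three local pictures Type~$1$, Type~$2$, Type~$3$ drawn in Fig.~\ref{structure1}.

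The main obstacle is the bookkeeping in this final step: the raw enumeration has several subcases (three hexagons with $0,1,2,$ or $3$ antipodal centers, and pentagon-plus-two-hexagons with the two H1/H2 choices on the hexagons), and one must verify that each is either geometrically identical to one of the three pictured types (after symmetry) or forbidden by the consistency conditions of Remark~\ref{remark1}. I expect no subtle global argument to be needed, only careful local checking.
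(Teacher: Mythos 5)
Your setup matches the paper's: split on whether $x$ lies on a pentagon (Lemma~\ref{2-pentagons} allows at most one), note that all second neighbors of $x$ are leaves, and reduce each hexagonal face at $x$ to the dichotomy ``antipodal vertex $w$ is a center (H1)'' versus ``$w$ is a leaf, forcing the outside neighbors of $u,w,v$ into $C(S)$ (H2).'' The pentagon analysis is also correct. But there is a genuine gap in the assembly step: the consistency conditions you propose to use --- no adjacent centers, no chord, no leaf/center double assignment at a vertex shared by two of the three faces at $x$ --- do not eliminate the bad combinations. In the all-hexagon case, the configuration with all three faces in state H2 (zero antipodal centers), and likewise the one with a single antipodal center, is perfectly consistent \emph{on the three faces at $x$}: every vertex there other than $x$ is a leaf, and every leaf can point to a center lying outside those faces. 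Similarly, in the pentagon case nothing inside the three faces forces the two hexagons into state H1. So your enumeration would terminate with extra surviving configurations that are not Types $1$--$3$, and the lemma would not be proved.

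The missing idea, which is the heart of the paper's proof, is to propagate one face further out: look at the face $f_i$ lying beyond the neighbor $y_i$ of $x$ (sharing only the edges $y_ip$ and $y_iu$ with the faces at $x$). If two adjacent faces at $x$ both contribute a forced center on $f_i$ (as happens whenever a face at $x$ is in state H2, or is the pentagon), then $f_i$ carries two centers that are either adjacent (if $f_i$ is a pentagon, contradicting independence/Proposition~\ref{1-pentagon}) or non-antipodal on a hexagon (contradicting Proposition~\ref{1-hexagon}); equivalently, the vertex of $f_i$ between them would be a leaf with two center neighbors, contradicting Remark~\ref{remark1}(2). This is exactly how the paper shows that at least two of the three antipodal points must be centers in Case~1, and that both hexagons are forced into state H1 in Case~2. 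Your plan needs this outward step; the checks you list, restricted to the three faces at $x$, cannot do the job.
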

\begin{proof}
By the Lemma \ref{2-pentagons}, at most one of the three faces sharing $x$ is a pentagon since $x\in C(S)$. There are two cases as follows.

\textbf{Case 1.} The three faces sharing $x$ are all hexagons.

Clearly, $x$ has three antipodal points on the three hexagons sharing $x$, denoted by $x_1$, $x_2$ and $x_3$ respectively as depicted in Fig. \ref{structure1} (a).
By Remark \ref{remark1} 2, the two neighbors $v_1$ and $v_3$ of $v_2$ are leaves in $S$. Similarly, $u_1, u_3, w_1$ and $w_3$ are also leaves in $S$.
We claim that at least two of $x_1, x_2$ and $x_3$ are centers of stars in $S$.
If $x_1$ is not the center of a star in $S$, then $x_1$ is a leaf in $S$. So the third neighbor of $v_1$, say $y_1$, is the center of a star in $S$ (see Fig. \ref{structure1} (b)). Similarly, the third neighbor of $w_3$, say $y_2$, is also the center of a star in $S$.
Since the three vertices $v_1, v_2$ and $v_3$ are leaves in $S$ and $y_1\in C(S)$, the face $f_1$ has only one center of $S$ by Propositions \ref{1-hexagon} and \ref{1-pentagon}.
Hence the two neighbors of $v_3$ on $f_1$ are leaves.
By Remark \ref{remark1} 2, $x_3$ is the center of a star in $S$, that is,
$x_3\in C(S)$.
Similarly, $w_1$ is a leaf in $S$ and the two neighbors of $w_1$ on $f_2$ are all leaves in $S$. Hence $x_2\in C(S)$.
So at least two of $x_1$, $x_2$ and $x_3$ belong to $C(S)$.
If exactly two of $x_1$, $x_2$ and $x_3$ belong to $C(S)$, without loss of generality, we suppose that $x_2, x_3\in C(S)$, then all the vertices on the three faces sharing $x$ are covered by $S$ as Type $2$.
If all the three vertices $x_1$, $x_2$ and $x_3$ belong to $C(S)$ (see Fig. \ref{structure1} (a)), then all the vertices on the three faces sharing $x$ are covered by $S$ as Type $1$.

\textbf{Case 2.} Exactly one of the three faces sharing $x$ is a pentagon.

By Proposition \ref{1-pentagon}, $w_1$ and $u_3$ are leaves in $S$ (see Fig. \ref{structure1} (c)). Hence $x_4, x_3\in C(S)$ and $f$ is a hexagon by Remark \ref{remark1}. 2 and Proposition \ref{1-hexagon}.
By Remark \ref{remark1}. 2, the neighbor $w_3$ of $w_2$ is a leaf in $S$ since the neighbor $x$ of $w_2$ belong to $C(S)$.
Hence the other vertices on $f_1$ except for $x_4$ are all leaves in $S$ by Propositions \ref{1-pentagon} and \ref{1-hexagon}.
This follows that the neighbor $x_1$ of $w_3$ is the center of a star in $S$ by Remark \ref{remark1} 2.
Similarly, we can show $x_2\in C(S)$. Hence all the vertices on the three faces sharing $x$ are covered by $S$ as Type $3$ (see Fig. \ref{structure1} (c)).
\end{proof}

\begin{cor}\label{pentaon-and-nonfacialcycle}
Let $S$ be a perfect star packing of fullerene graph $G$. If a pentagon $P$ of $G$ has a vertex $x\in C(S)$, then $G-C(S)$ has a non-facial cycle $C$ of $G$ such that the path $P-x$ is a subgraph of $C$.
\end{cor}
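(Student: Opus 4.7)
My plan is to locate the path $P-x$ inside a single induced cycle of $G-C(S)$ and then rule out that this cycle can be a face of $G$. Write the pentagon as $P = xy_1y_2y_3y_4x$, so $P-x$ is the path $y_1y_2y_3y_4$. Since $x\in C(S)$ lies on $P$, Proposition~\ref{1-pentagon} forces the other four vertices out of $C(S)$, hence into $G-C(S)$. By Theorem~\ref{characterization}, every vertex of $G-C(S)$ has degree exactly $2$ there, and the three edges $y_1y_2,\,y_2y_3,\,y_3y_4$ inherited from $P$ all survive in $G-C(S)$. These three edges already account for both of $y_2$'s and both of $y_3$'s neighbors in $G-C(S)$, so the path $y_1y_2y_3y_4$ sits consecutively on a single component $C$ of $G-C(S)$; in particular $P-x\subseteq C$.

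It remains to show that $C$ is non-facial. Since $x\in V(P)\setminus V(C)$ we have $C\neq P$, so it suffices to exclude the possibility that $C$ is a face of $G$ distinct from $P$. I would handle this with a local argument at $y_2$: if $C$ were a face, then $C$ and $P$ would be two distinct face boundaries of the $3$-connected plane cubic graph $G$ sharing both edges $y_1y_2$ and $y_2y_3$ at the vertex $y_2$. But around $y_2$ the three incident faces are pairwise distinct and occupy the three sectors cut out by the three edges at $y_2$, so any two distinct faces can be bordered by at most one common edge at $y_2$. This contradicts the presence of two shared edges, so $C$ must be non-facial.

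The only real (and modest) obstacle is that last step, which rests on the standard consequence of $3$-connectivity that two distinct face boundaries of a $3$-connected plane cubic graph cannot share two consecutive edges. Everything else is a direct and essentially mechanical application of Theorem~\ref{characterization} and Proposition~\ref{1-pentagon}, with the cubic structure of $G$ pinning down the neighbors of $y_2$ and $y_3$ in $G-C(S)$.
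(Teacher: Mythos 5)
Your proof is correct, but it takes a genuinely different route from the paper. The paper derives this corollary from Lemma \ref{structuure-star}: since $x\in C(S)$ lies on a pentagon, Lemma \ref{2-pentagons} forces the other two faces at $x$ to be hexagons, the local configuration is therefore of Type $3$, and the non-facial cycle through $P-x$ is then read off from that fully determined picture. You bypass that machinery entirely: Proposition \ref{1-pentagon} puts the four remaining pentagon vertices into $G-C(S)$, Theorem \ref{characterization} makes each component of $G-C(S)$ an induced ($2$-regular) cycle, so the surviving path $y_1y_2y_3y_4$ sits on a single cycle $C$ with $x\notin V(C)$, and finally $C$ cannot be facial because in a $3$-connected plane cubic graph every face boundary is a cycle, hence passes through $y_2$ only once and occupies exactly one of the three corners there, so two distinct faces can share at most one of the two edges $y_1y_2$, $y_2y_3$. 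That last planarity fact is standard and your use of it is sound. The trade-off: your argument is more elementary and self-contained (it needs only Proposition \ref{1-pentagon} and Theorem \ref{characterization}, not the Type $1$/$2$/$3$ classification), while the paper's approach yields the richer Type $3$ structural information around $x$ that it reuses in later arguments. Both establish the corollary.
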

\begin{proof}
By Proposition \ref{1-pentagon}, $x$ is shared by this pentagon $P$ and two hexagons. So all the vertices on the three faces sharing $x$ are covered by $S$ as Type $3$ (see Fig. \ref{structure1} (c)).
Clearly, the path $P-x$ is a subgraph of a cycle $C$ in $G-C(S)$ and $C$ is a non-facial cycle of $G$.
\end{proof}
We note that $3$-connected graphs have only one embedding up to equivalence \cite{Diestel_book}. If we embed a fullerene graph $G$ in the plane, then any non-facial cycle $C$ of $G$ as a Jordan curve separates the plane into two regions, denoted by $R_1^*$ and $R_2^*$, each of which has the entire $C$ as its frontier. We denote the subgraph of $G$ induced by the vertices lying in the interior of $R_i^*$  by $G_i$, $i=1, 2$. Here we note that $\{V(G_1), V(G_2), V(C)\}$ is a partition of all the vertices of $G$.
We say that
$C$ \emph{divide} the graph $G$ into two \emph{sides} $G_1$ and $G_2$.
\begin{thm}
Let $S$ be a perfect star packing of fullerene graph $G$ and $C$ be a cycle in $G-C(S)$. Then $C(S)$ does not have a vertex which has three neighbors on $C$.
\end{thm}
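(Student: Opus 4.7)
I would argue by contradiction and choose $x$, among all centers violating the statement (paired with a relevant face, see (c) below), so as to minimize the number of vertices strictly enclosed. Since $C$ is induced (Remark \ref{remark1}\,3), it is a Jordan curve in the plane embedding, and without loss of generality $x$ lies in the interior; the three edges $xn_1,xn_2,xn_3$ partition the interior into sub-regions $\Omega_1,\Omega_2,\Omega_3$, with $\Omega_k$ bounded by $xn_i,xn_j$ and an arc $A_k$ of $C$. The face $f_k$ of $G$ at $x$ between $xn_i,xn_j$ lies in $\Omega_k$, and because $n_i$'s only non-$x$ neighbors are its two $C$-neighbors, the first and last intermediate vertices of the facial-complement path $\pi_k$ (the $n_i$-$n_j$ path along $\partial f_k$ avoiding $x$) lie on $C$.

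I would then run a sub-case analysis of each $f_k$: (a) pentagon, both intermediates on $C$, so by the no-chord property of $C$ one has $\pi_k=A_k$ and $|A_k|=3$; (b) hexagon with all intermediates on $C$, giving $\pi_k=A_k$ and $|A_k|=4$; (c) hexagon with antipodal $m_k$ of $x$ off $C$, in which case $m_k\in C(S)$ (the two facial neighbors $a_k,b_k$ of $m_k$ are leaves on $C$ whose only possible center-neighbor is $m_k$), and girth $5$ forces $|A_k|\ge 5$ (a shorter arc would combine with the path $a_k m_k b_k$ into a triangle or $4$-cycle). Invoking Lemma \ref{structuure-star}, Type 1 puts all three faces in (c), Type 3 puts the pentagon in (a) and both hexagons in (c), and Type 2 puts one face in (b) or (c) and the other two in (c). In particular, at least one face always falls into sub-case (c).

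The hardest step, and the heart of the proof, is to examine the sub-region $\Omega_k':=\Omega_k\setminus\overline{f_k}$ for a face $f_k$ in sub-case (c). This region is bounded by the path $a_k m_k b_k$ and the sub-arc $A_k[a_k,b_k]$ of $C$, and it must contain the third leaf $y_k$ of the star centered at $m_k$. One checks that $y_k$ cannot lie on $A_k[a_k,b_k]$, since a short cycle through $m_k$ and two $C$-vertices would violate girth $5$. Hence either $y_k\in V(C)$ lies on another arc --- making $m_k$ itself a new offending center, whose corresponding sub-region is strictly contained in $\Omega_k'$ and therefore contradicts the minimal choice of $x$ --- or $y_k$ lies strictly inside $\Omega_k'$. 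In this terminal case I would apply Lemma \ref{structuure-star} at $m_k$ and unwind the star at $y_k$: the resulting local configuration must either exhibit a chord of $C$ inside $\Omega_k'$ (contradicting Remark \ref{remark1}\,3) or generate a cycle of length at most $4$ in $G$ (contradicting girth $5$), yielding the required contradiction.
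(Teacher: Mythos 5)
Your overall strategy is the same as the paper's: set up the three sub-regions $\Omega_k$ at $x$, classify the faces at $x$ via Lemma \ref{structuure-star}, observe that the antipodal vertex $m_k$ of a case-(c) hexagon is forced into $C(S)$, and reduce by a well-founded recursion to the situation where the third neighbor $y_k$ of $m_k$ lies off $C$. The paper performs exactly this reduction (phrased as ``by the recursive process and the finiteness of the order of $G$''). However, your write-up of the reduction is internally inconsistent: the claim that $y_k$ cannot lie on $A_k[a_k,b_k]$ ``since a short cycle through $m_k$ and two $C$-vertices would violate girth $5$'' is false ($y_k$ may sit on that arc far from both $a_k$ and $b_k$, producing only long cycles --- this is precisely the case the paper recurses on), while the alternative you offer, that $y_k\in V(C)$ lies on \emph{another} arc, is geometrically impossible since $y_k$ is confined to $\overline{\Omega_k'}$, whose intersection with $C$ is exactly $A_k[a_k,b_k]$. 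This branch is salvageable --- if $y_k\in V(C)$ then $m_k$ is a new offending center with a case-(c) face in a strictly smaller region, so your minimality applies regardless of which sub-arc $y_k$ sits on --- but as stated the case split does not parse.

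The genuine gap is the terminal case, $y_k$ strictly inside $\Omega_k'$, which is the heart of the theorem and which you have not proved. The configuration ``center $m_k$ with neighbors $a_k,b_k$ on $C$ separated along $C$ by an arc of length at least $3$, and third neighbor $y_k$ off $C$'' contains no chord of $C$ and no cycle of length at most $4$, so neither of the two contradiction mechanisms you invoke can arise; asserting that ``unwinding the star at $y_k$'' must produce one of them is not an argument. What the paper actually does here is trace faces two steps along the arc from $b_k$: writing the arc as $\dots v_{k-2}v_{k-1}v_k$ with $v_k=b_k$, it identifies the face of $G$ whose boundary contains the path $v_{k-2}v_{k-1}v_k m_k y_k$, and shows that whether this face is a pentagon or a hexagon, the vertex $v_{k-2}$ acquires a third neighbor which is forced to be a leaf (since any neighbor of $y_k$ other than $m_k$ is a leaf by Remark \ref{remark1}.2); hence $v_{k-2}$ is a leaf all three of whose neighbors are leaves, contradicting Remark \ref{remark1}.2. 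Some such face-tracing argument is indispensable, and without it your proof does not close.
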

\begin{figure}[htbp!]
\centering
\includegraphics[height=4.8cm]{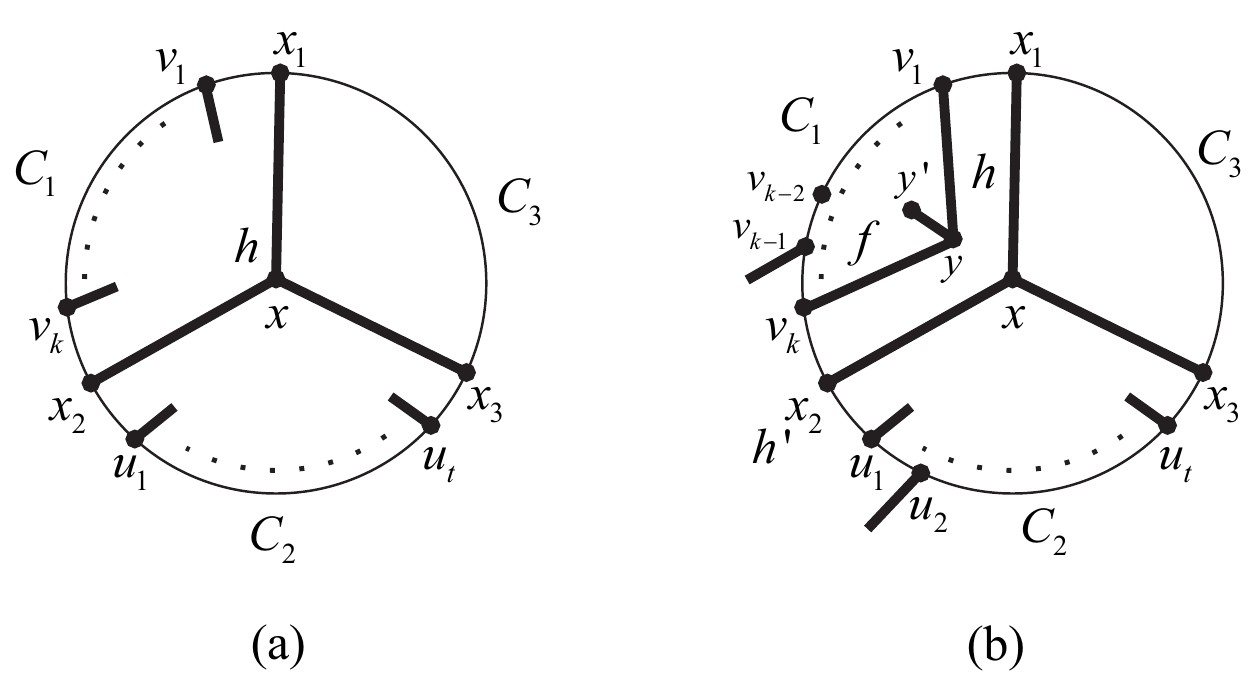}
\caption{\label{1center-3}{\small $x\in C(S)$ has three neighbors on $C$.}}
\end{figure}
\begin{proof}
If $C$ is a facial cycle of $G$, then $C$ is a pentagon or a hexagon. The conclusion clearly holds.
Now, let $C$ be a non-facial cycle of $G$.
Then $C$ divides $G$ into two sides, denoted by $H_1$ and $H_2$ respectively.
We note that all vertices on $C$ are leaves in $S$ since $C$ is a cycle in $G-C(S)$.
On the contrary, we suppose that there is a vertex $x\in C(S)$ which has three neighbors on $C$, denoted by $x_1, x_2$ and $x_3$ respectively. Without loss of generality, we suppose that $x\in V(H_1)$ (see Fig. \ref{1center-3} (a)).
The three vertices separate the circle $C$ into three sections, denoted by $C_1, C_2$ and $C_3$ respectively, each of which is a path with $x_i$ and $x_{i+1}$ as two terminal ends, $i=1, 2, 3$ (if $i=3$, then $i+1:=1$).
From Lemma \ref{structuure-star} we know that at most one of $x_1C_1x_2x$, $x_2C_2x_3x$ and $x_3C_3x_1x$ is a facial cycle of $G$ since $C$ is a cycle in $G-C(S)$.
Next, we suppose that $x_1C_1x_2x$ and $x_2C_2x_3x$ are non-facial cycles of $G$. Let $C_1=x_1v_1v_2\cdots v_kx_2$, $C_2=x_2u_1u_2\cdots u_tx_3$.
So $k\geq5$ and $t\geq5$ since any non-facial cycle of $G$ has length at least $8$.
By Remark \ref{remark1}.3,
$C$ does not have a chord.
So $v_1v_k\notin E(G)$ and $u_1u_t\notin E(G)$.
This implies that $h$ is a hexagon face of $G$, and $x_1, x, x_2$ and $v_1, v_k$ are five vertices on $h$. We denote the sixth vertex of $h$ by $y$. Clearly, $y\in V(H_1)$ by the planarity of $G$ (see Fig. \ref{1center-3} (b)).
Similarly, both $u_1$ and $u_t$ have a common neighbor in $H_1$.

Since $S$ is a perfect star packing of $G$ and the two neighbors $x_1$ and $v_2$ of $v_1$ are leaves in $S$,
$y$ is the center of a star in $S$.
If the third neighbor of $y$ is on $C$, then it is on $C_1$, denoted it by $v_r$. The three neighbors of $y$ separate the circle $C$ into three sections, two of which are subgraphs of $C_1$, denoted by $C_1^1$ and $C_1^2$ respectively. As the above discussion, we know that one of $v_1C_1^1v_ry$ and $v_rC_1^2v_ky$ is a non-facial cycle of $G$. By the recursive process and the finiteness of the order of $G$, we can suppose that the third neighbor of $y$ is not on $C$, and denoted it by $y'$.

See Fig. \ref{1center-3} (b), the five vertices $v_{k-1}, v_k, x_2, u_1, u_2$ belong to a common facial cycle $h'$ of $G$.
Since $C$ does not have a chord by Remark \ref{remark1} 3, $v_{k-1}$ and $u_2$ are not adjacent in $G$. So $h'$ is a hexagon. By the planarity of $G$, $v_{k-1}$ and $u_2$ have a common neighbor in $H_2$.
so $v_{k-2}, v_{k-1}, v_k, y$ and $y'$ are on a face of $G$, say $f$.
If $f$ is a pentagon, then $v_{k-2}$ is adjacent to $y'$.
So all the three neighbors of $v_{k-2}$ are leaves in $S$. This implies a contradiction since $v_{k-2}$ is also a leaf in $S$.
If $f$ is a hexagon, then $v_{k-2}$ and $y'$ have a common neighbor, denoted by $z$.
Clearly, $z$ is $v_{k-3}$ or not.
For $z=v_{k-3}$, the three neighbors of $v_{k-3}$ are all leaves in $S$, a contradiction.
For $z\neq v_{k-3}$, by Remark \ref{remark1} 2, $z$ is a leaf in $S$ since $y'$ has a neighbor $y\in C(S)$.
So the three neighbors of $v_{k-2}$ are all leaves in $S$, a contradiction.
All these contradictions imply that $C(S)$ does not have a vertex which has three neighbors on $C$.
\end{proof}

Let $S$ be a perfect star packing of fullerene graph $G$ and $C$ be a cycle in $G-C(S)$ which is a non-facial cycle of $G$. $C$ divides $G$ into two sides, denoted by $H_1$ and $H_2$ respectively.
Set $C^i$ be the set of all the vertices on $C$ each of which has a neighbor in $H_i$, $i=1, 2$.
Clearly, $\{C^1, C^2\}$ is a partition of $V(C)$.
$G[C^i]$ is a vertex induced subgraph of $G$ which has vertex set $C^i$ and any two vertices of $C^i$ are adjacent if and only if they are adjacent in $G$.
See Fig. \ref{structure-C-t}, $G[C^1]$ is depicted as red and $G[C^2]$ is depicted as blue.
In the following, we use these symbols no longer explaining.
\begin{lem}\label{structure-C}
For $i=1, 2$, if a vertex $x$ on $C$ has a neighbor in $H_i$, then the component of the induced subgraph $G[C^i]$ which contains $x$ is a path with $2$ or $3$ vertices.
\end{lem}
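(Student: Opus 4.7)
My plan is to study the face $F^\star$ of $G$ lying on the $H_{3-i}$ side of $C$ that runs along the whole component, and to deduce both bounds from its boundary length being at most $6$.

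First I would set up notation. Since $C$ has no chord (Remark \ref{remark1} 3), $G[V(C)]=C$, so every component of $G[C^i]$ is a subpath $P=x_1x_2\cdots x_k$ of $C$; write $x_0,x_{k+1}$ for the two $C$-neighbors of $x_1,x_k$ outside $P$, which by maximality lie in $C^{3-i}$. For each $j$ let $y_j$ denote the off-$C$ neighbor of $x_j$. Every vertex of $C$ is a leaf of $S$ and its two $C$-neighbors are leaves as well, so by Remark \ref{remark1} 2 the unique center neighbor of $x_j$ must be $y_j$; hence $y_j\in C(S)$ for every $j$, with $y_1,\ldots,y_k\in H_i$ and $y_0,y_{k+1}\in H_{3-i}$.

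The key step is the face $F^\star$ on the $H_{3-i}$ side of the edge $x_1x_2$. At each interior vertex $x_j$ ($1\le j\le k$), the off-$C$ edge $x_jy_j$ points into $H_i$, so both $C$-edges at $x_j$ bound the same face on the opposite side; hence $F^\star$ propagates along $P$ and has $x_0,x_1,\ldots,x_k,x_{k+1}$ consecutively on its boundary. At the endpoints $x_0,x_{k+1}\in C^{3-i}$ the off-$C$ edges $x_0y_0$ and $x_{k+1}y_{k+1}$ point into $H_{3-i}$, so $F^\star$ leaves $C$ through these edges and places $y_0,y_{k+1}$ on its boundary. Thus the boundary of $F^\star$ contains a path $y_0,x_0,x_1,\ldots,x_k,x_{k+1},y_{k+1}$ of $k+3$ edges. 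Since $F^\star$ is a pentagon or a hexagon, its boundary cycle has at most $6$ edges, giving $k+3\le 6$ and hence $k\le 3$.

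To rule out $k=1$ I run the same analysis: the boundary of $F^\star$ contains the $4$-edge path $y_0,x_0,x_1,x_2,y_2$. If $F^\star$ is a pentagon, then $y_0y_2$ is an edge, contradicting the independence of $C(S)$ (Remark \ref{remark1} 1). If $F^\star$ is a hexagon, the remaining boundary vertex $z$ is adjacent to both centers $y_0$ and $y_2$; but a leaf of $S$ has only one center neighbor by Remark \ref{remark1} 2, while any $z\in C(S)$ would violate independence again, so neither option is possible. The degenerate coincidence $y_0=y_2$ would collapse $F^\star$ into a $4$-cycle, impossible in a fullerene. So $k\ge 2$, and combining with $k\le 3$ is the claim.

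The main obstacle is the face-propagation statement in the third paragraph — namely that the single face $F^\star$ really runs across the whole of $P$ and then escapes through $y_0,y_{k+1}$, rather than being cut up. This hinges on the fact that at each interior $x_j$ the off-$C$ neighbor lies in $H_i$, so both $C$-edges at $x_j$ lie on the same $H_{3-i}$-side face; whereas at the endpoints $x_0,x_{k+1}$ the off-$C$ edge flips to the $H_{3-i}$ side, which is exactly what forces the $y_0$ and $y_{k+1}$ terminations and makes the $k+3$ count tight.
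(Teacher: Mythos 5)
Your proof is correct, and it reorganizes the argument in a genuinely tidier way than the paper. The paper proves this lemma by a three-way case analysis on where the off-$C$ neighbors of the two $C$-neighbors of $x$ lie (both in $H_{3-i}$, both in $H_i$, or one of each), handling each case with the facial cycle on the appropriate side together with Propositions \ref{1-pentagon} and \ref{1-hexagon}; your version extracts the common mechanism — the single face $F^\star$ on the $H_{3-i}$ side that runs along the entire maximal run $P$ and exits through the off-$C$ edges at $x_0$ and $x_{k+1}$ — and gets the upper bound $k\le 3$ from one edge count, then kills $k=1$ by the same local analysis the paper uses in its Case 1. Two small points deserve care. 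First, the walk $y_0,x_0,\ldots,x_{k+1},y_{k+1}$ on the boundary of $F^\star$ need not be a path: $y_0$ and $y_{k+1}$ can coincide, and indeed must coincide when $k=3$ (this is exactly the paper's Case 2 picture, where $v_2$ and $v_{k-1}$ share a neighbor in $H_2$); your count of $k+3$ \emph{edges} against the at most $6$ edges of the face boundary is the right invariant and survives this coincidence, but you should say ``closed walk with distinct edges'' rather than ``path.'' Second, you implicitly assume the flanking vertices $x_0,x_{k+1}$ exist, i.e.\ that $C^{3-i}\neq\emptyset$; this is true because $C$ is non-facial (otherwise the region $R_{3-i}^*$ would contain no vertices and $C$ would bound a face of length at least $8$), and is worth one sentence. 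What your approach buys is a uniform bound that does not branch into cases; what the paper's approach buys is the explicit description of the adjacent faces (pentagon for a $2$-vertex component, hexagon for a $3$-vertex component) that is reused verbatim in Proposition \ref{stucture-C-S}.
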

\begin{proof}
We suppose that $x$ on $C$ has adjacent vertex in $H_1$. For the convenience of the following description, set $C:=xv_1v_2\cdots v_kx$.
Since $C$ is a cycle in $G-C(S)$ which is a non-facial cycle of $G$, the length of $C$ is at least $8$.
So $k\geq7$.
There are three cases for the two neighbors $v_1$ and $v_k$ of $x$ on $C$.

\textbf{Case 1.} Both $v_1$ and $v_k$ have neighbors in $H_2$.

In this case, the three vertices $v_1, x$ and $v_k$
lie on the same face $f$ of $G$ (see Fig. \ref{structure-C-beside} (a)).
Since all the vertices on $C$ are leaves in $S$, the other neighbor of $v_1$ (resp. $v_k$) which is not on $C$ is the center of a star in $S$. So $f$ has two vertices in $C(S)$ which are the centers of two stars in $S$ covered $v_1$ and $v_k$, respectively. So $f$ is a hexagon by Proposition \ref{1-pentagon}.
But the case cannot hold by Propositions \ref{1-hexagon}.
\begin{figure}[htbp!]
\centering
\includegraphics[height=5.5cm]{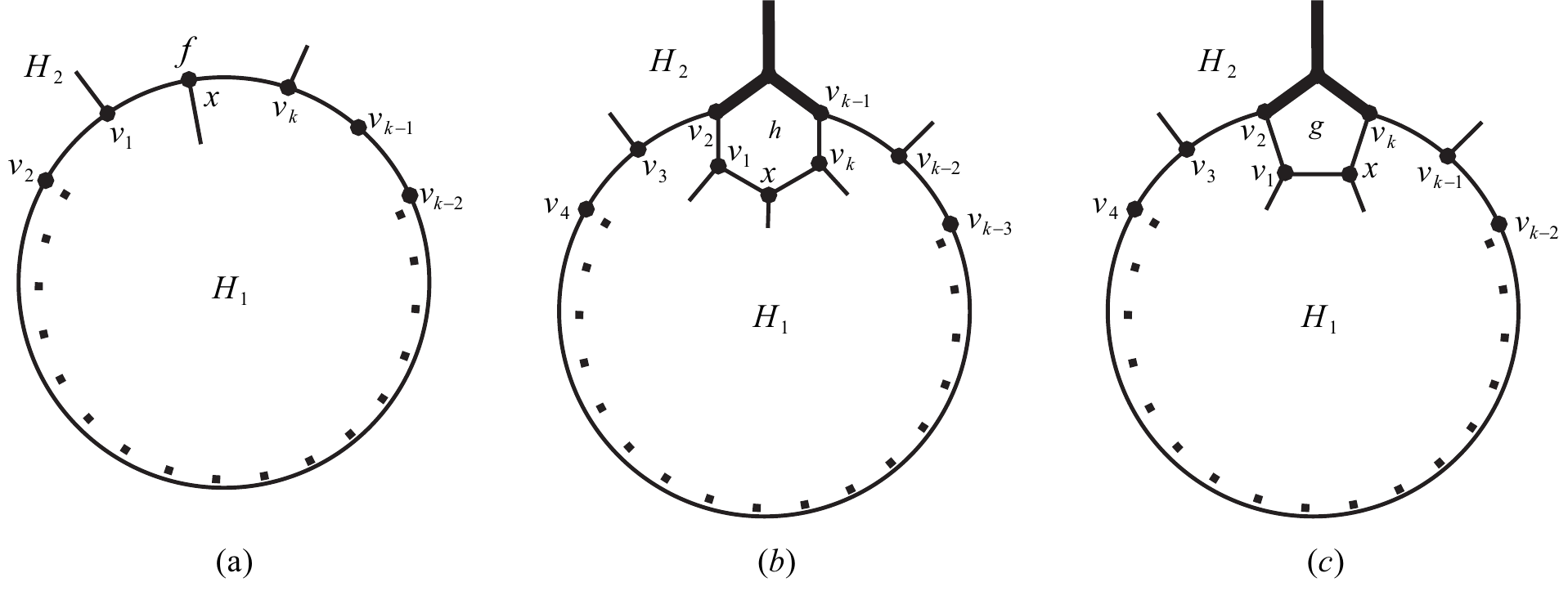}
\caption{\label{structure-C-beside}{\small (a) $v_1$ and $v_k$ have neighbors in $H_2$; (b) $v_1$ and $v_k$ have neighbors in $H_1$; (c) $v_1$ has a neighbor in $H_1$, $v_k$ has one in $H_2$.}}
\end{figure}

\textbf{Case 2.} Both $v_1$ and $v_k$ have neighbors in $H_1$.

In this case, the five vertices $v_2, v_1, x, v_k, v_{k-1}$ belong to a facial cycle $h$ of $G$ (see Fig. \ref{structure-C-beside} (b)).
We claim that both $v_2$ and $v_{k-1}$ have neighbors in $H_2$.
Otherwise, at least one of $v_2$ and $v_{k-1}$ has a neighbor in $H_1$.
If $v_2$ has a neighbor in $H_1$ and $v_{k-1}$ has a neighbor in $H_2$, then the six vertices $v_3, v_2, v_1, x, v_k, v_{k-1}$ lie on a face $h$ of $G$. So $h$ is a hexagon and $C$ has a chord $v_3v_{k-1}$, a contradiction.
For $v_2$ having a neighbor in $H_2$ and $v_{k-1}$ having a neighbor in $H_1$, we can also obtain a chord of $C$, a contradiction.
If both $v_2$ and $v_{k-1}$ have neighbors in $H_1$, then the seven vertices $v_3, v_2, v_1, x, v_{k}, v_{k-1}, v_{k-2}$ belong to a common face $h$ of $G$. This implies that $G$ has a facial cycle of length at least $7$, a contradiction.
So both $v_2$ and $v_{k-1}$ have neighbors in $H_2$,
and $v_2, v_1, x, v_k, v_{k-1}$ lie on a hexagon $h$ of $G$ (see Fig. \ref{structure-C-beside} (b)).
Since $C$ does not have a chord, the path $v_1xv_k$ is a connected component of the induced subgraph $G[C^1]$.

\textbf{Case 3.} $v_1$ has a neighbor in $H_1$ and $v_k$ has a neighbor in $H_2$, or $v_1$ has a neighbor in $H_2$ and $v_k$ has a neighbor in $H_1$.

By the symmetry, it is sufficient to consider that $v_1$ has a neighbor in $H_1$ and $v_k$ has a neighbor in $H_2$.
If $v_2$ has a neighbor in $H_1$, then $v_3$ must have a neighbor in $H_2$, otherwise, $C$ has a chord or $G$ has a facial cycle of length at least seven, a contradiction. As the proof of Case 2, $v_3, v_2, v_1, x, v_k$ lie on a hexagonal facial cycle.
So the path $v_2v_1x$ is a connected component of the induced subgraph $G[C^1]$.
Now, we suppose that $v_2$ has a neighbor in $H_2$.
Then the four vertices $v_k, x, v_1, v_2$ lie on the same face $g$ of $G$.
Since $v_k, x, v_1, v_2$ are all leaves in $S$, $g$ is a pentagon and $v_2, v_k$ have a common neighbor in $H_2$ which is the center of a star in $S$ (see Fig. \ref{structure-C-beside} (c)).
So the path $xv_1$ is a connected component of the induced subgraph $G[C^1]$.

In summary, the component of the induced subgraph $G[C^1]$ which contains $x$ is a path with $2$ or $3$ vertices since $C$ does not have a chord.
\end{proof}
In addition, we have the following Lemma.
\begin{lem}\label{each-comp-C}
Each component of $G[C^i]$ is a path with $2$ or $3$ vertices, $i=1, 2$.
\end{lem}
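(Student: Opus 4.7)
The plan is to deduce Lemma \ref{each-comp-C} as an immediate corollary of Lemma \ref{structure-C}. First I would observe that by the very definition of $C^i$, every vertex in $V(G[C^i])$ lies on $C$ and has at least one neighbor in the side $H_i$. Thus Lemma \ref{structure-C} applies to every vertex of $G[C^i]$, not just to a distinguished one.

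Fixing $i \in \{1,2\}$, let $K$ be an arbitrary component of $G[C^i]$ and pick any $x \in V(K)$. Since $x$ is on $C$ with a neighbor in $H_i$, Lemma \ref{structure-C} tells us that the component of $G[C^i]$ containing $x$ is a path on $2$ or $3$ vertices; but that component is exactly $K$. Since $K$ was arbitrary, every component of $G[C^i]$ is such a path.

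I do not expect any real obstacle here, as the structural and case-analysis work is already absorbed into the proof of Lemma \ref{structure-C} (the three configurations of the two $C$-neighbors of $x$ relative to $H_1$ and $H_2$, together with the chord-freeness of $C$ from Remark \ref{remark1}.3 and the face-size restrictions from Propositions \ref{1-pentagon} and \ref{1-hexagon}). The only point worth making explicit is that the phrase ``the component containing $x$'' in Lemma \ref{structure-C} and ``each component'' in Lemma \ref{each-comp-C} are linked by the fact that $C^i$ contains \emph{only} vertices with a neighbor in $H_i$, so the hypothesis of Lemma \ref{structure-C} is met at every vertex of $G[C^i]$.
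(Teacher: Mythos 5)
Your proposal is correct and follows essentially the same route as the paper: both reduce the lemma to Lemma \ref{structure-C} by noting that every vertex of $C^i$ satisfies its hypothesis, so the component through any such vertex is a path on $2$ or $3$ vertices. The paper additionally remarks that each vertex of $C$ has \emph{exactly} one neighbor off $C$ (by $3$-regularity and chord-freeness), but this plays no essential role beyond what you already use.
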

\begin{figure}[htbp!]
\centering
\includegraphics[height=8.5cm]{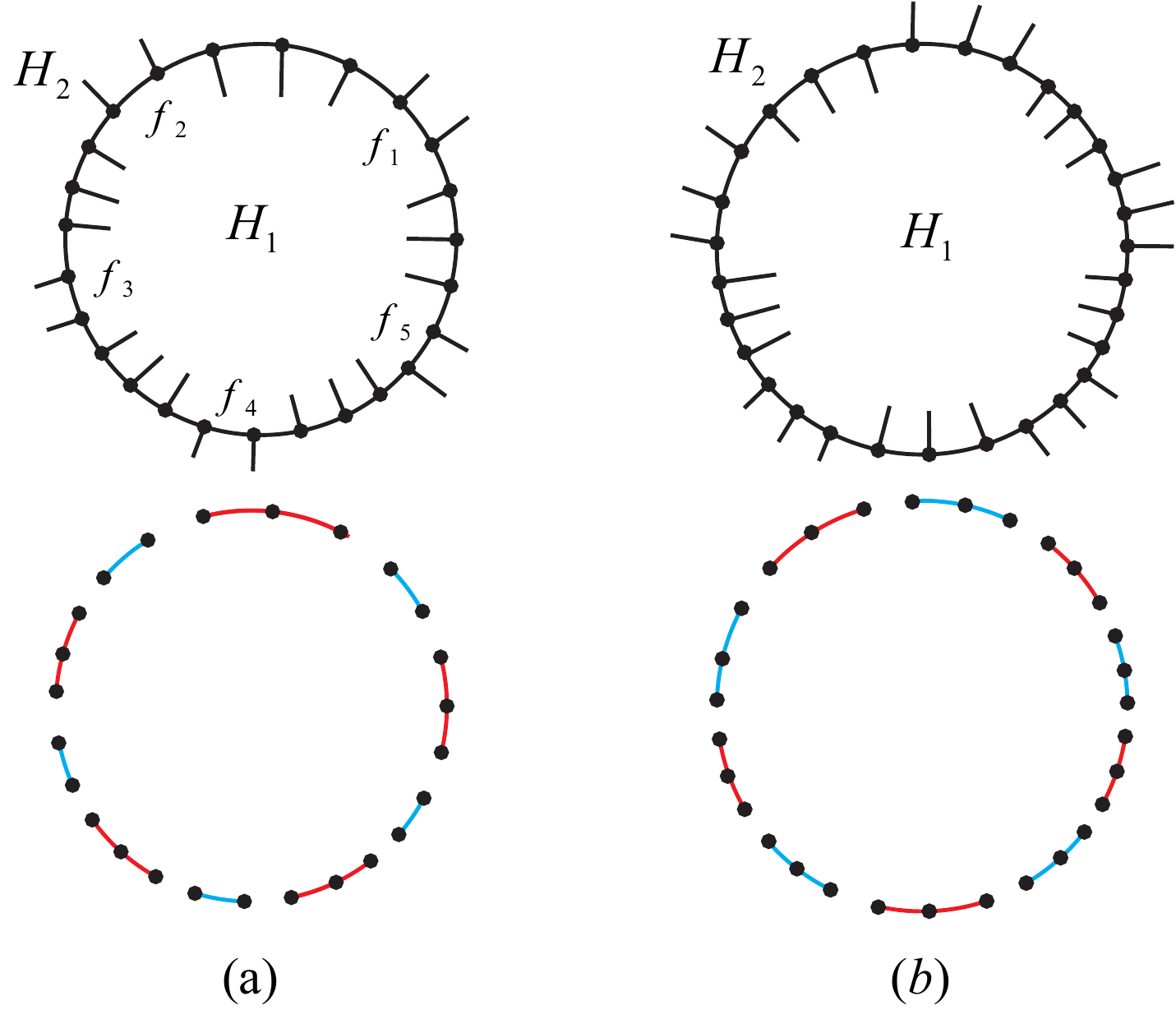}
\caption{\label{structure-C-t}{\small (a) A cycle $C$ of length $25$; (b) A cycle $C$ of length $30$. ($G[C^1]$ is red and $G[C^2]$ is blue.)}}
\end{figure}
\begin{proof}
For any vertex $x$ on $C$, $x$ must have exactly one neighbor in $H_1$ or $H_2$ since $G$ is $3$-regular and $C$ does not have a chord. Without loss of generality, we suppose that $x$ has exactly one neighbor in $H_1$.
By Lemma \ref{structure-C}, the component of the induced subgraph $G[C^1]$ which contains $x$ is a path with $2$ or $3$ vertices .
We note that the choice of $x$ is arbitrary. So the conclusion holds.
\end{proof}
\begin{prop}\label{stucture-C-S}
Let $C=v_0v_1\cdots v_{k-1}$ be a non-facial cycle in $G-C(S)$. $($In the following, the subscript is module $k$$)$.\\
$(i)$ If both $v_i$ and $v_{i+1}$ have neighbors in $H_1$ $($or $H_2$$)$ and $v_{i-1}$ and $v_{i+2}$ have neighbors in $H_2$ $($or $H_1$$)$, then the four vertices $v_{i-1}, v_i, v_{i+1}$ and $v_{i+2}$
lie on a pentagon of $G$.\\
$(ii)$ If $v_i, v_{i+1}, v_{i+2}$ have neighbors in $H_1$ $($or $H_2$$)$ and $v_{i-1}$ and $v_{i+3}$ have neighbors in $H_2$ $($or $H_1$$)$, then the five vertices $v_{i-1}, v_i, v_{i+1}, v_{i+2}$ and $v_{i+3}$
 lie on a hexagon of $G$.\\
$(iii)$ For $j=1, 2$, if both $v_i$ and $v_{i+1}$ have neighbors in $H_j$ $($we denote the two edges incident to $v_i$ and $v_{i+1}$ not lie in $C$ by $e_i$ and $e_{i+1}$, respectively$)$, then
the facial cycle containing both $e_i$ and $e_{i+1}$ is a hexagon, and two antipodal points on this hexagon are centers of two stars in the perfect star packing $S$.
\end{prop}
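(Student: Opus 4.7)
My plan is to analyze each part by isolating a single face of $G$ adjacent to the relevant arc of $C$, tracing just enough of its boundary, and then using the dichotomy that every face is either a pentagon or a hexagon. Throughout I will use that every vertex $v_j$ of $C$ is a leaf in $S$ (because $C\subseteq G-C(S)$), and that by Remark \ref{remark1}.2 its unique off-$C$ neighbor $y_j$ must therefore lie in $C(S)$, since the two on-$C$ neighbors of $v_j$ are already leaves.

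For parts (i) and (ii) the relevant face $F$ sits on the opposite side of $C$ from the burst of $H_1$-edges; concretely, on the $H_2$ side of an edge $v_jv_{j+1}$ with both of $v_j,v_{j+1}$ sending their third edge into $H_1$. At each such $v_j$ the boundary of $F$ must turn from $v_jv_{j+1}$ onto the other $C$-edge at $v_j$, while at the two endpoints of the $H_1$-burst (the neighbors $v_{i-1}$ and $v_{i+2}$ in (i), or $v_{i-1}$ and $v_{i+3}$ in (ii)) it peels off $C$ via the corresponding $H_2$-edge to some $w=y_{i-1}$ and $z=y_{i+2}$ (resp.\ $z=y_{i+3}$). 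So the boundary of $F$ carries four consecutive $C$-vertices plus $w,z$ in (i), and five consecutive $C$-vertices plus $w,z$ in (ii). For (i) the hexagon alternative would make $w$ and $z$ adjacent, but $w,z\in C(S)$ and $C(S)$ is independent by Remark \ref{remark1}.1; hence $F$ is a pentagon with $w=z$, giving the desired pentagon through $v_{i-1},v_i,v_{i+1},v_{i+2}$. For (ii) the pentagon alternative would force the edge $v_{i-1}v_{i+3}$, a chord of $C$ forbidden by Remark \ref{remark1}.3; hence $F$ is a hexagon, $w=z$, and all five vertices $v_{i-1},\dots,v_{i+3}$ together with their common neighbor lie on it.

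For part (iii) I take $F$ to be the unique face on the $H_j$ side of $v_iv_{i+1}$, which by construction contains both $e_i$ and $e_{i+1}$. Its boundary is $v_iv_{i+1}y_{i+1}\cdots y_iv_i$ where $y_i,y_{i+1}\in C(S)$. If $F$ were a pentagon it would contain two centers of $S$, contradicting Proposition \ref{1-pentagon}; so $F$ is a hexagon with cyclic order $v_i,v_{i+1},y_{i+1},z_1,z_2,y_i$. In this order $y_i$ and $y_{i+1}$ are diametrically opposite, which is precisely the antipodal pair of centers on a hexagon predicted by Proposition \ref{1-hexagon}.

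The main obstacle is bookkeeping rather than ideas: one has to correctly identify \emph{which} face to follow in each case (the one opposite the $H_1$-burst in (i),(ii), and the one on the same side in (iii)) and verify that the planar embedding really forces the traced boundary to visit the claimed vertices in the claimed order. Once the face is unambiguously described, the pentagon-versus-hexagon elimination is a one-line application of Remark \ref{remark1}.1 in (i), Remark \ref{remark1}.3 in (ii), and Proposition \ref{1-pentagon} in (iii).
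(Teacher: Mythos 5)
Your proof is correct and takes essentially the same route as the paper: the paper disposes of (i) and (ii) by referring back to the face-tracing arguments in Cases 2 and 3 of the proof of Lemma \ref{structure-C}, which you simply reproduce in full with the same pentagon-versus-hexagon eliminations (independence of $C(S)$ for (i), chordlessness of $C$ for (ii)). Your argument for (iii) is likewise the paper's own appeal to Propositions \ref{1-pentagon} and \ref{1-hexagon}, with the only cosmetic difference that you read off the antipodality directly from the cyclic order of the hexagon.
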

\begin{proof}
Cases $(i)$ and $(ii)$ can be easily obtained from the proof of the Cases $2$ and $3$ of Lemma \ref{structure-C} (see Fig. \ref{structure-C-beside}).
Since all the vertices on $C$ are leaves in the perfect star packing $S$, the other end of $e_i$ (resp. $e_{i+1}$) which is not on $C$, denoted by $u_{i}$ (resp. $u_{i+1}$), is the center of a star in $S$.
We know that any facial cycle of $G$ is a pentagon or a hexagon. So $u_{i}$ and $u_{i+1}$ are distinct. By Lemmas \ref{1-pentagon} and \ref{1-hexagon}, the facial cycle containing both $e_i$ and $e_{i+1}$ is a hexagon, and $u_{i}$ and $u_{i+1}$ are antipodal points on this hexagon.
\end{proof}
For example, in Fig. \ref{structure-C-t}, except for $f_i, i\in\{1, 2, 3, 4, 5\}$  the other faces sharing edges with $C$ are all hexagons. Moreover, how the vertices on $C$ being covered by $S$ is determined.

We recall that the the union of two graphs $G_1$ and $G_2$ is denoted by $G_1\cup G_2$, which has vertex set $V(G_1)\cup V(G_2)$ and edge set $E(G_1)\cup E(G_2)$.
Let $n_3$ be the number of the components of $G[C^1]\cup G[C^2]$ each of which is isomorphic to a path with $3$ vertices.
Similarly, $n_2$ is the number of the components of $G[C^1]\cup G[C^2]$ each of which is isomorphic to a path with $2$ vertices.
For example, $n_3=n_2=5$ in Fig. \ref{structure-C-t} (a) and $n_3=10$, $n_2=0$ in Fig. \ref{structure-C-t} (b).

\begin{obs}\label{n2n3}
$n_2+n_3$ is even.
\end{obs}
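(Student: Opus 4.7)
The plan is to view the components of $G[C^1]\cup G[C^2]$ as the monochromatic blocks of a proper $2$-coloring of the cycle $C$, and then exploit the parity of the number of color changes around a cycle.

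First I would observe that every vertex $v\in V(C)$ has a unique neighbor off $C$: since $G$ is cubic and $C$ has no chord by Remark \ref{remark1}.3, two of the three neighbors of $v$ lie on $C$ and exactly one lies off $C$. Assign to each vertex $v\in V(C)$ the color $1$ if its off-$C$ neighbor lies in $H_1$, and $2$ otherwise; this is the same as saying $v\in C^1$ or $v\in C^2$. Because $C$ has no chord, the only edges of $G$ among the vertices of $V(C)$ are the edges of $C$ itself, so the edge set of $G[C^i]$ consists exactly of those edges $v_jv_{j+1}\in E(C)$ with both $v_j$ and $v_{j+1}$ of color $i$. Consequently the components of $G[C^1]\cup G[C^2]$ are in bijection with the maximal monochromatic arcs of this $2$-coloring on $C$.

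Next I would invoke Lemma \ref{each-comp-C}, which tells us that every component of $G[C^i]$ is a path on $2$ or $3$ vertices. In particular, no vertex of $C$ is a singleton component of $G[C^1]\cup G[C^2]$, so at every boundary between two consecutive maximal arcs the color genuinely changes. Let $m$ denote the number of maximal arcs; then $m=n_2+n_3$, and going once around $C$ the colors form the cyclic alternation $1,2,1,2,\ldots$. For such an alternating pattern to close up after one full turn of $C$, the number $m$ of blocks must be even, which is precisely the claim.

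The only subtle point, and the reason we cannot conclude directly from Lemma \ref{structure-C}, is the necessity of ruling out singleton components of $G[C^i]$: if one isolated color-$i$ vertex were allowed between two color-$(3-i)$ blocks, the coloring could read $\ldots,3-i,i,3-i,\ldots$, producing two boundaries in a row and breaking the parity argument. Lemma \ref{each-comp-C}, which upgrades Lemma \ref{structure-C} from ``the component containing $x$'' to every vertex of $C$, is exactly what forces the strict alternation between consecutive arcs and hence makes the argument go through.
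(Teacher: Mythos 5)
Your proof is correct and is essentially the intended argument: the paper states this as an Observation with no written proof, and the evident justification is exactly your alternation-of-monochromatic-arcs parity argument around the cycle $C$. (One small quibble: maximality of the arcs already forces the color to change at every boundary, singleton or not, so a singleton would not break the alternation; the real role of Lemma \ref{each-comp-C} is to guarantee that every component is a $P_2$ or $P_3$, so that $n_2+n_3$ equals the total number of arcs and the degenerate one-arc case is excluded.)
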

\begin{prop}\label{n2-n3-parity}
Let $S$ be a perfect star packing of fullerene graph $G$ and $C$ a cycle in $G-C(S)$ which is a non-facial cycle of $G$.
Then the length of $C$ is $3n_3+2n_2$, and
has the the same parity with $n_2$ and $n_3$.
\end{prop}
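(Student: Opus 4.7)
The plan is to read both assertions off as immediate consequences of Lemma \ref{each-comp-C} together with Observation \ref{n2n3}. First I would use that $\{C^1,C^2\}$ is a partition of $V(C)$, so the vertex-disjoint subgraphs $G[C^1]$ and $G[C^2]$ together cover $V(C)$, and the components of their union $G[C^1]\cup G[C^2]$ partition the vertex set of $C$. By Lemma \ref{each-comp-C}, every such component is a path on either $2$ or $3$ vertices, and by definition these contribute $n_2$ and $n_3$ components respectively. Summing vertex counts over all components yields $|V(C)|=3n_3+2n_2$, and since the length of $C$ equals $|V(C)|$, the first formula is established.

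For the parity statement I would simply reduce $|C|=3n_3+2n_2$ modulo $2$. The term $2n_2$ vanishes and $3n_3\equiv n_3\pmod{2}$, so $|C|\equiv n_3\pmod{2}$. Observation \ref{n2n3} supplies $n_2+n_3\equiv 0\pmod{2}$, equivalently $n_2\equiv n_3\pmod{2}$. Chaining these two congruences gives $|C|\equiv n_2\equiv n_3\pmod{2}$, which is precisely the desired parity claim.

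There is essentially no genuine obstacle at this stage, since the substantive structural work has already been packaged into Lemma \ref{each-comp-C} (which forces each component of $G[C^i]$ to be a $2$- or $3$-vertex path) and Observation \ref{n2n3} (which fixes the parity of $n_2+n_3$). The present proposition is then a short arithmetic corollary; the only point requiring any care is to notice that $G[C^1]$ and $G[C^2]$ are vertex-disjoint, so the components of their union are exactly the disjoint union of the components of each, and the count $3n_3+2n_2$ really does exhaust all vertices on $C$ without double counting.
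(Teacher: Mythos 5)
Your proposal is correct and follows essentially the same route as the paper: both derive $|C|=3n_3+2n_2$ from Lemma \ref{each-comp-C} via the partition $\{C^1,C^2\}$ of $V(C)$, reduce modulo $2$ to get $|C|\equiv n_3$, and invoke Observation \ref{n2n3} for $n_2\equiv n_3$. Your version merely spells out the disjoint-union bookkeeping a little more explicitly than the paper does.
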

\begin{proof}
Clearly, the length of $C$ is $3n_3+2n_2$ by Lemma \ref{each-comp-C}. So $n_3$ is odd if and only if the length of $C$ is odd.
Since $n_2+n_3$ is even by Observation \ref{n2n3}, the parity of $n_2$ and $n_3$ are same.
Then we are done.
\end{proof}

\begin{thm}\label{even-odd-cycles}
Let $S$ be a perfect star packing of fullerene graph $G$. Then $G-C(S)$ has even number of odd cycles.
\end{thm}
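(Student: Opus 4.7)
The plan is to count the odd cycles of $G-C(S)$ by separating facial from non-facial cycles, and to match the counts against the $12$ pentagons of $G$. By Proposition \ref{1-pentagon}, every pentagon of $G$ contains at most one vertex of $C(S)$; let $p_a$ and $p_b$ denote the number of pentagons containing $0$ and exactly $1$ vertex of $C(S)$, respectively, so $p_a+p_b=12$. The facial cycles appearing as components of $G-C(S)$ are precisely those faces of $G$ (pentagons or hexagons) which avoid $C(S)$, and among these only the pentagons are odd, so they contribute exactly $p_a$ facial odd cycles. Writing $q$ for the number of non-facial cycles of $G-C(S)$ of odd length, the total odd-cycle count equals $p_a+q$, so it suffices to prove $q\equiv p_b\pmod{2}$.

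By Proposition \ref{n2-n3-parity}, every non-facial cycle $C$ of $G-C(S)$ satisfies $|C|\equiv n_2^{(C)}\pmod{2}$, where $n_2^{(C)}$ is the $n_2$ attached to $C$. Hence
\[
q=\#\{C:n_2^{(C)}\text{ odd}\}\equiv\sum_{C}n_2^{(C)}\pmod{2},
\]
the sum running over the non-facial cycles of $G-C(S)$. The heart of the argument is therefore the identity $\sum_{C}n_2^{(C)}=p_b$.

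To establish it I would build a bijection between the type-$b$ pentagons of $G$ and the $2$-vertex components of $G[C^1]\cup G[C^2]$ taken across all non-facial cycles $C$. Proposition \ref{stucture-C-S}(i) provides one direction: each such $2$-vertex component, say $\{v_i,v_{i+1}\}\subseteq C^j$ flanked by $v_{i-1},v_{i+2}\in C^{3-j}$, comes with four consecutive vertices of $C$ plus the common off-$C$ neighbour of $v_{i-1}$ and $v_{i+2}$, which is the unique $C(S)$-neighbour of those two leaves and thus lies in $C(S)$, yielding a type-$b$ pentagon. For the inverse, given a type-$b$ pentagon $P$ with $C(S)$-vertex $x$, Corollary \ref{pentaon-and-nonfacialcycle} places $P-x$ on a non-facial cycle $C$ of $G-C(S)$; this $C$ is unique because $G-C(S)$ is $2$-regular and $P-x$ is a connected $4$-vertex path. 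Labelling $P-x=v_{i-1}v_iv_{i+1}v_{i+2}$ so that $v_{i-1}$ and $v_{i+2}$ are the neighbours of $x$, the pair $\{v_i,v_{i+1}\}$ forms a $2$-vertex component of $G[C^1]$ or $G[C^2]$, inverse to the first map.

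Putting the pieces together, $q\equiv\sum_{C}n_2^{(C)}=p_b\pmod{2}$, so the total number of odd cycles is $p_a+q\equiv p_a+p_b=12\equiv 0\pmod{2}$. I expect the most delicate step to be verifying, in the inverse map, that $\{v_i,v_{i+1}\}$ is really an isolated $2$-vertex component of $G[C^j]$ rather than being absorbed into a larger component: this requires a planarity argument showing that the off-$C$ edges at $v_i,v_{i+1}$ leave $C$ on the side opposite to the pentagon $P$, while the off-$C$ edges at $v_{i-1},v_{i+2}$ both end at $x$ on $P$'s side, forcing $\{v_i,v_{i+1}\}$ and $\{v_{i-1},v_{i+2}\}$ into opposite colour classes of $\{C^1,C^2\}$.
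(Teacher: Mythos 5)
Your proof is correct and follows essentially the same route as the paper: both reduce the problem to showing that the number of odd non-facial cycles of $G-C(S)$ has the same parity as the number of pentagons containing a star center, via Proposition \ref{n2-n3-parity} (parity of $|C|$ equals parity of $n_2$), the identification of each $2$-vertex component of $G[C^1]\cup G[C^2]$ with a pentagon carrying a center (Proposition \ref{stucture-C-S} and Corollary \ref{pentaon-and-nonfacialcycle}), and the fact that $12\equiv 0\pmod 2$. Your single mod-$2$ computation $q\equiv\sum_C n_2^{(C)}=p_b$ is a slightly cleaner packaging of the paper's Claims 1 and 2 and its case analysis on the parities of $p$ and $k$, and the planarity point you flag for the inverse map is exactly the content already established in Case 3 of Lemma \ref{structure-C}.
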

\begin{proof}
If $G-C(S)$ does not have a non-facial cycle of $G$, then any pentagon of $G$ does not have a vertex in $C(S)$ by Corollary \ref{pentaon-and-nonfacialcycle}. So all the vertices on pentagons are leaves in $S$. It implies that $G-C(S)$ has exactly twelve odd cycles, each of which is a pentagon.
Next, we suppose that $G-C(S)$ has a non-facial cycle of $G$, denoted by $C$.

\textbf{Claim 1.} If $C$ is an even cycle, then $G$ has even number of pentagons which share edges with $C$. If $C$ is an odd cycle, then $G$ has odd number of pentagons which share edges with $C$.

By Proposition \ref{stucture-C-S}, the number of pentagons which share edges with $C$ is equal to $n_2$. By Proposition \ref{n2-n3-parity}, $n_2$ and the length of $C$ have the same parity. So the Claim holds.

\textbf{Claim 2.} Any pentagon of $G$ shares edges with at most one non-facial cycle in $G-C(S)$.

Let $P$ be a pentagon of $G$. By Proposition \ref{1-pentagon}, $P$ has at most one vertex which is the center of a star in $S$.
If $P$ does not have a vertex in $C(S)$, then $P$ is a cycle in $G-C(S)$. By Theorem \ref{characterization}, each component of $G-C(S)$ is an induced cycle of $G$. So $P$ does not share edges with any non-facial cycle in $G-C(S)$.
If $P$ has a vertex $x\in C(S)$, then by Corollary \ref{pentaon-and-nonfacialcycle} $P-x$ is a subgraph of a non-facial cycle in $G-C(S)$. So $P$ shares edges with exactly one non-facial cycle in $G-C(S)$.

Now, we consider the following two cases for the non-facial cycles in $G-C(S)$.

\textbf{Case 1.} $G-C(S)$ does not have a non-facial cycle of odd length.

Then any non-facial cycle $C$ in $G-C(S)$ is of even length. By the above Claims, there are even number of pentagons in $G$ such that they share edges with $C$. Since $G$ has exactly twelve pentagons, there are even number of pentagons in $G$ each of which does not share edges with non-facial cycles in $G-C(S)$. These pentagons must be cycles in $G-C(S)$ by Corollary \ref{pentaon-and-nonfacialcycle}. Hence $G-C(S)$ has even number of odd cycles.

\textbf{Case 2.} $G-C(S)$ has some non-facial cycle of odd length.

Suppose that $G-C(S)$ has exactly $k$ non-facial cycles of odd length.
We denote the number of pentagons in $G$ each of which does not share edges with non-facial cycles in $G-C(S)$  by $p$.
These $p$ pentagons must be cycles in $G-C(S)$ by Corollary \ref{pentaon-and-nonfacialcycle}.
So $G-C(S)$ has $p+k$ odd length cycles.
Next, we show that $p$ and $k$ have the same parity.
If $p$ is odd, then $G$ has odd number of pentagons each of which share edges with exactly one non-facial cycle in $G-C(S)$ since $G$ has exactly $12$ pentagons.
By the above Claims, for each even length non-facial cycle in $G-C(S)$, $G$ has even number of pentagons which share edges with the cycle, and for each odd length non-facial cycle in $G-C(S)$, $G$ has odd number of pentagons which share edges with the cycle.
So $G-C(S)$ has odd number of non-facial cycles of odd length.
This means that $k$ is odd.
For $p$ being even, we can similarly show that $k$ is even.
So $k$ and $p$ have the same parity and $p+k$ is even.
\end{proof}

Clearly, for a fullerene graph $G$ with a perfect star packing, its order must be divisible by $4$. So the order of $G$ is $8k$ or $8k+4$ for some positive integer $k$.
Now, we can obtain the following main theorem which illustrate that the order of $G$ can not be $8k+4$.
\begin{thm}\label{main-thm}
If fullerene graph $G$ has a perfect star packing, then the order of $G$ is divisible by $8$.
\end{thm}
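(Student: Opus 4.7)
The plan is to combine the structural result of Theorem \ref{characterization} with the parity count in Theorem \ref{even-odd-cycles}. Since $S$ is a perfect star packing and each star $K_{1,3}$ contributes exactly one center and three leaves, I immediately have $|V(G)|=4|C(S)|$, so divisibility by $4$ is automatic. What needs to be shown is that $|C(S)|$ is itself even.

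To get the extra factor of $2$, I would look at $G-C(S)$. By Theorem \ref{characterization}, each component of $G-C(S)$ is an induced cycle of $G$; let their lengths be $\ell_1,\ell_2,\ldots,\ell_m$. The vertices of $G-C(S)$ are precisely the leaves of $S$, so
\begin{equation*}
\sum_{i=1}^{m}\ell_i \;=\; |V(G)|-|C(S)| \;=\; 3|C(S)|.
\end{equation*}
Now invoke Theorem \ref{even-odd-cycles}: the number of indices $i$ with $\ell_i$ odd is even. Hence $\sum_i \ell_i$ is even (an even number of odd terms plus any number of even terms is even). Therefore $3|C(S)|$ is even, which forces $|C(S)|$ to be even, and consequently $|V(G)|=4|C(S)|$ is divisible by $8$.

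There is essentially no obstacle at this stage, because the heavy lifting was already absorbed into Theorem \ref{even-odd-cycles}: that was where the twelve pentagons and the parity analysis of non-facial cycles (via Propositions \ref{stucture-C-S} and \ref{n2-n3-parity}) were used to control the parity of the number of odd components of $G-C(S)$. The only thing to double-check is that Theorem \ref{characterization} indeed applies to the packing $S$ in the form needed here, i.e.\ that every component of $G-C(S)$ really is a cycle (not a more general $2$-regular subgraph issue); but this is immediate because every non-center vertex has exactly one neighbor in $C(S)$ and two neighbors among the leaves, so $G-C(S)$ is $2$-regular, and its components are induced cycles. The proof is therefore a short parity argument built directly on top of Theorem \ref{even-odd-cycles}.
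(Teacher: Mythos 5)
Your argument is correct and is essentially the same as the paper's: both decompose $|V(G)|$ as $|C(S)|$ plus the total length of the induced cycles of $G-C(S)$, invoke Theorem \ref{even-odd-cycles} to conclude that this total length $3|C(S)|$ is even, and deduce that $|C(S)|=|V(G)|/4$ is even. No further comment is needed.
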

\begin{proof}
We suppose that $S$ is a perfect star packing of $G$ and $\mathcal{C}_o$ and $\mathcal{C}_e$ are the collections of all the odd cycles and even cycles in $G-C(S)$, respectively. Then we have the following equation.
\begin{equation}
\begin{split}
|V(G)| &=|C(S)|+\sum_{C\in\mathcal{C}_o}|C|+\sum_{C\in\mathcal{C}_e}|C|\\
& = \frac{|V(G)|}{4}+\sum_{C\in\mathcal{C}_o}|C|+even.\\
\end{split}
\end{equation}
By Theorem \ref{even-odd-cycles}, $\mathcal{C}_o$ has even number of elements. Combine the above equation, we know that $\frac{|V(G)|}{4}\times3$ is even. Hence $\frac{|V(G)|}{4}$ is even, that is, the order of $G$ is divisible by $8$.
\end{proof}
This theorem is equivalent to the following corollary.
\begin{cor}
A fullerene graph with order $8n+4$ does not have a perfect star packing.
\end{cor}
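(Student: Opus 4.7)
The plan is almost immediate: this corollary is simply the contrapositive of Theorem \ref{main-thm}. I would argue by contradiction. Suppose a fullerene graph $G$ on $8n+4$ vertices does admit a perfect star packing $S$. Then Theorem \ref{main-thm} forces $|V(G)|$ to be divisible by $8$. But $8n+4$ leaves remainder $4$ upon division by $8$, so $|V(G)|$ is not divisible by $8$, a contradiction. Hence no such packing can exist.

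There is no real technical obstacle here, since all the substantive work has already been carried out upstream: the identity
\[
|V(G)| \;=\; |C(S)| + \sum_{C \in \mathcal{C}_o} |C| + \sum_{C \in \mathcal{C}_e} |C|
\]
together with $|C(S)| = |V(G)|/4$ and the parity input from Theorem \ref{even-odd-cycles} (which provides $\sum_{C \in \mathcal{C}_o} |C| \equiv 0 \pmod 2$) forces $3|V(G)|/4$, and hence $|V(G)|/4$, to be even. As observed in the sentence preceding Theorem \ref{main-thm}, the only residues modulo $8$ compatible with a fullerene order that is already divisible by $4$ are $0$ and $4$, so forcing $|V(G)| \equiv 0 \pmod 8$ in the theorem is logically the same statement as ruling out $|V(G)| \equiv 4 \pmod 8$ in the corollary. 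Consequently, I expect the written proof to consist of a single sentence invoking Theorem \ref{main-thm}.
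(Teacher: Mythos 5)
Your proposal is correct and matches the paper exactly: the paper offers no separate proof, simply noting that the corollary is equivalent to Theorem \ref{main-thm} (given the preceding observation that the order of a fullerene with a perfect star packing is divisible by $4$, so the only possible residues modulo $8$ are $0$ and $4$). Your contradiction argument via the contrapositive is precisely this one-line deduction.
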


We recall that
a \emph{dominating set} of a graph $G$ is a set of vertices $D$ such that each vertex in $V(G)-D$ is adjacent to a vertex in $D$.
Moreover, if each vertex in $V(G)-D$ is adjacent to exactly one vertex in $D$ and $D$ is an independent vertex set, then $D$ is called \emph{efficient}.
The problem of determining the existence of efficient dominating sets in some families of graphs was first investigated by Biggs \cite{Biggs} and Kratochvil \cite{Kratochvil}.
Later Livingston and Stout \cite{per_dominating} studied the existence and construction of efficient dominating sets in families of graphs arising from the interconnection networks of parallel computers. The problem of finding an efficient dominating set,
however, is algorithmically hard \cite{Alg_hard}.
For more results and some historical background regarding efficient dominating set, we refer the reader to \cite{Worst_case_effidom, effi_cayley, dom_ver_tran, circulane_effidom} etc.

From the definitions of the efficient dominating set and the perfect star packing of a fullerene graph $G$,
the following proposition is a natural result.
 \begin{prop}\label{pspacking-effidominating}
 A fullerene graph $G$ with $n$ vertices has a perfect star packing if and only if $G$ has an efficient dominating set of cardinality $\frac{n}{4}$.
 \end{prop}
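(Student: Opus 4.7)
The plan is to prove both directions directly from the definitions, using the cubicity of $G$ and Remark \ref{remark1} to translate between the two structures.

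For the forward direction, I would start with a perfect star packing $S$ of $G$ and take $D := C(S)$ as the candidate efficient dominating set. Since $S$ partitions $V(G)$ into $n/4$ vertex-disjoint copies of $K_{1,3}$, one gets $|D| = n/4$ immediately. Independence of $D$ is Remark \ref{remark1}(1). For the ``exactly one neighbor'' property, observe that each vertex $v \notin D$ is a leaf of precisely one star in $S$, so it has at least one neighbor in $D$; Remark \ref{remark1}(2) then gives that this neighbor is unique. Hence $D$ is efficient of cardinality $n/4$.

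For the reverse direction, I would start with an efficient dominating set $D$ of cardinality $n/4$ and, for each $d \in D$, form the star $H_d$ consisting of $d$ together with its three neighbors in $G$ (which is a genuine $K_{1,3}$ since $G$ is $3$-regular). I would then check that the collection $\{H_d : d \in D\}$ is vertex-disjoint: no two centers can coincide as leaves of each other because $D$ is independent, and no vertex outside $D$ can appear as a leaf in two different $H_d$'s since efficiency requires each such vertex to have exactly one neighbor in $D$. Finally a vertex count gives $|D| + 3|D| = 4 \cdot (n/4) = n$, so every vertex of $G$ is covered and $\{H_d\}$ is a perfect star packing.

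Neither direction presents a genuine obstacle; the proposition is essentially a translation of definitions, with the mild content being that cubicity of $G$ ensures the three neighbors of a center form a full $K_{1,3}$ and that independence plus the ``exactly one neighbor in $D$'' condition rules out overlap among the candidate stars. The only place one must be careful is verifying vertex-disjointness in the reverse direction, which is where both defining conditions of an efficient dominating set are used simultaneously.
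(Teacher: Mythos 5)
Your proof is correct and matches the paper's intent exactly: the paper offers no written argument, stating only that the proposition ``is a natural result'' of the definitions, and your two directions (taking $D:=C(S)$ with Remark \ref{remark1}, and conversely building a star around each vertex of $D$ using $3$-regularity, independence, and the exactly-one-neighbor condition) are precisely the definitional translation being invoked. No gaps.
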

 Combine Theorem \ref{main-thm} and Proposition \ref{pspacking-effidominating}, we get the following theorem.
\begin{thm}
The order of a fullerene graph with an efficient dominating set is $8n$.
\end{thm}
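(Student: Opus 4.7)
The plan is to chain the two results that immediately precede the statement. Suppose $G$ is a fullerene on $n$ vertices with an efficient dominating set $D$. First I would observe that in any cubic graph, an efficient dominating set has cardinality exactly $n/4$: since $D$ is independent and each vertex of $V(G)\setminus D$ has exactly one neighbor in $D$, the closed neighborhoods $\{v\}\cup N(v)$ for $v\in D$ partition $V(G)$, and each such closed neighborhood has size $1+3=4$ because $G$ is $3$-regular. Hence $4|D|=n$, so $|D|=n/4$.

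Next, I would invoke Proposition \ref{pspacking-effidominating}, which states that a fullerene on $n$ vertices has a perfect star packing if and only if it admits an efficient dominating set of cardinality $n/4$. Combined with the previous step, this gives that $G$ has a perfect star packing.

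Finally, I would apply Theorem \ref{main-thm} to conclude that $n$ is divisible by $8$, i.e.\ $n=8k$ for some positive integer $k$.

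There is no real obstacle here since both ingredients are already established in the paper; the only content beyond a quotation is the elementary cardinality count $|D|=n/4$ forced by $3$-regularity, which the author may even consider implicit. I would keep the write-up to two or three lines.
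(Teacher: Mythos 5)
Your proposal matches the paper's argument exactly: the paper derives this theorem by combining Proposition \ref{pspacking-effidominating} with Theorem \ref{main-thm}, leaving the cardinality count $|D|=n/4$ implicit. Your explicit justification of that count via the partition into closed neighborhoods of size $4$ is correct and fills in the only detail the paper omits.
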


\section{Some other conclusions}
Do\v{s}li\'{c} et. al. gave the following spectral necessary condition for the existence of a perfect star packing in a fullerene graph.
\begin{prop}[\cite{Doslic}]
If a fullerene graph $G$ has a perfect star packing, then $-1$ must be an eigenvalue of the adjacency matrix of $G$.
\end{prop}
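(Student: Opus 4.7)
The plan is to exhibit a concrete nonzero eigenvector of the adjacency matrix $A$ of $G$ for the eigenvalue $-1$, built directly from the perfect star packing $S$. The guiding observation is that $S$ partitions $V(G)$ into centers and leaves, and by Remark \ref{remark1} the local neighborhood of every vertex is completely controlled by this partition: a center sees three leaves (since $G$ is cubic, the three edges incident to a center are exactly the star edges of its $K_{1,3}$), while a leaf sees exactly one center and exactly two other leaves. This rigidity makes it natural to look for an eigenvector that is constant on $C(S)$ and constant on $V(G)\setminus C(S)$.

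Concretely, I would try $x\in\mathbb{R}^{V(G)}$ defined by $x_v=3$ if $v\in C(S)$ and $x_v=-1$ otherwise, and verify $Ax=-x$ vertex by vertex. At a center $v$ all three neighbors are leaves, so $(Ax)_v=3\cdot(-1)=-3=-x_v$. At a leaf $v$ one neighbor lies in $C(S)$ and two are leaves, so $(Ax)_v=3+2\cdot(-1)=1=-x_v$. Since a perfect star packing forces $|C(S)|=|V(G)|/4>0$, the vector $x$ is nonzero, and therefore $-1$ is an eigenvalue of $A$.

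There is no genuine obstacle here: the whole argument rests on Remark \ref{remark1} together with the $3$-regularity of $G$, and reduces to a two-line computation once the weights $(3,-1)$ are guessed. A slightly more conceptual route, which I might record for transparency, starts from the characteristic vector $y$ of $C(S)$: the same local counting yields $Ay=\mathbf{1}-y$, where $\mathbf{1}$ is the all-ones vector, and combining this with $A\mathbf{1}=3\mathbf{1}$ shows that $4y-\mathbf{1}$ is an eigenvector of $A$ for the eigenvalue $-1$, which is precisely the vector $x$ above up to scale. The only real creative step is spotting the right ansatz; everything else is forced by the combinatorial structure already established in the paper.
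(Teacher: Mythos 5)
Your proof is correct and takes essentially the same route as the paper: the paper sets $\overrightarrow{w}=\overrightarrow{u}-(r+1)\overrightarrow{c}$, where $\overrightarrow{c}$ is the characteristic vector of $C(S)$ and $\overrightarrow{u}$ is the all-ones vector, and verifies $A\overrightarrow{w}=-\overrightarrow{w}$ from $A\overrightarrow{c}=\overrightarrow{u}-\overrightarrow{c}$; for $r=3$ this is exactly your vector $x=4y-\mathbf{1}$ up to sign. The only difference is that the paper records the computation for general $r$-regular graphs with a perfect $K_{1,r}$-packing, while you specialize to $r=3$ and check the eigenvalue equation vertex by vertex.
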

The proof of this Theorem can be translate to a simple $r$-regular graph. Here for completeness, we prove as follows.
For the definition of eigenvalues of the adjacency matrix of a graph, we refer the reader to \cite{eigenvalue}.
\begin{thm}
If a simple $r$-regular graph $G$ has a perfect $K_{1,r}$-packing $S$, then $-1$ must be an eigenvalue of the adjacency matrix of $G$.
\end{thm}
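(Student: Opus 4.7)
The plan is to produce an explicit eigenvector of $A(G)$ with eigenvalue $-1$, using the structure of the perfect star packing $S$. The guiding idea is that each star $K_{1,r}$ in $S$ has a very rigid local structure, and assigning the same value to every center and another common value to every leaf yields a nearly-uniform vector whose image under $A$ can be computed one vertex at a time.

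Concretely, I would define a vector $x \in \mathbb{R}^{V(G)}$ by $x_v = -r$ if $v \in C(S)$ and $x_v = 1$ if $v$ is a leaf in $S$. Because $S$ is a spanning subgraph whose components are stars, every vertex is either a center or a leaf, so $x$ is well defined; it is nonzero since, e.g., every leaf contributes $1$. To check $Ax = -x$, I evaluate coordinate by coordinate. For a center $c$: since $\deg_G(c) = r$ and the star of $S$ centered at $c$ already uses all $r$ edges at $c$, every neighbor of $c$ in $G$ is a leaf in that star, so $(Ax)_c = r \cdot 1 = r = -x_c$.

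The slightly more delicate case is a leaf $\ell$, and this is where I expect to spend the most care. One of its $r$ neighbors is its own center, contributing $-r$ to $(Ax)_\ell$. The key structural point is that none of the remaining $r-1$ neighbors of $\ell$ can be a center: if $\ell$ were adjacent to some center $c' \neq c$, then since all $r$ edges at $c'$ already go to the $r$ leaves of $c'$'s star, $\ell$ itself would have to be a leaf in $c'$'s star, contradicting that $\ell$ is a leaf in the star centered at $c$. Therefore the remaining $r-1$ neighbors of $\ell$ are all leaves, and $(Ax)_\ell = -r + (r-1)\cdot 1 = -1 = -x_\ell$.

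Combining the two cases, $Ax = -x$ with $x \neq 0$, so $-1$ is an eigenvalue of $A(G)$. The main obstacle is really only the adjacency-to-other-centers point above; once that is nailed down the computation is a line. No part of the argument uses that $G$ is planar or cubic, so it applies to any simple $r$-regular graph admitting a perfect $K_{1,r}$-packing, as required.
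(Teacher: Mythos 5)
Your proof is correct and is essentially the paper's argument: the paper takes $\overrightarrow{w}=\overrightarrow{u}-(r+1)\overrightarrow{c}$, which is exactly your vector ($-r$ on centers, $1$ on leaves), and derives $A\overrightarrow{w}=-\overrightarrow{w}$ from the identity $A\overrightarrow{c}=\overrightarrow{u}-\overrightarrow{c}$, which encodes the same structural facts you check coordinate-by-coordinate (every neighbor of a center is a leaf of its star; every leaf has exactly one center neighbor).
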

\begin{proof}
Let $C(S)$ be the set of centers of stars $K_{1,r}$ in $S$. We define the characteristic vector $\overrightarrow{c}\in \mathbb{R}^{|V(G)|}$ of $C(S)$ as follows: $c_i=1$ if $i\in C(S)$, otherwise $c_i=0$.
Set $\overrightarrow{u}$ be the vector of all ones.
For the adjacency matrix $A$ of $G$, we have $A\overrightarrow{u}=r\overrightarrow{u}$ since $G$ is $r$-regular.
Let $\overrightarrow{w}=\overrightarrow{u}-(r+1)\overrightarrow{c}$. As $A\overrightarrow{c}=\overrightarrow{u}-\overrightarrow{c}$, we have
\begin{equation}
A\overrightarrow{w}=A\overrightarrow{u}-(r+1)A\overrightarrow{c}=r\overrightarrow{u}-(r+1)\overrightarrow{u}+(r+1)\overrightarrow{c}
=(r+1)\overrightarrow{c}-\overrightarrow{u}=-\overrightarrow{w}
\end{equation}
This implies that $-1$ is an eigenvalue of $A$.
\end{proof}
A perfect star packing $S$ of a fullerene graph $G$ is of \emph{type $P0$} if no center of a star in $S$ is on a pentagon of $G$. For such perfect star packing, the following corollary holds.
\begin{cor}
If fullerene graph $G$ has a perfect star packing $S$ of type $P0$, then $G-C(S)$ does not have a non-facial cycle of odd length.
\end{cor}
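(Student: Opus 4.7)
The plan is to reduce the statement to the parity count already developed in Propositions \ref{stucture-C-S} and \ref{n2-n3-parity}. The key point is that the type $P0$ hypothesis forces every pentagon of $G$ to be an isolated component of $G-C(S)$, so that no non-facial cycle of $G$ inside $G-C(S)$ can share an edge with any pentagon. Once that is established, a counting argument along such a cycle gives evenness of its length.

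First I would spell out the consequence of $P0$: since no center of $S$ lies on any pentagon, each of the twelve pentagons $P$ of $G$ has all five vertices among the leaves of $S$, so $V(P)\subseteq V(G-C(S))$. Because fullerenes are $3$-connected and cubic, a pentagon is chordless, hence $P$ is an induced $5$-cycle in $G-C(S)$. By Theorem \ref{characterization}, each component of $G-C(S)$ is an induced cycle, and any longer induced cycle through all five vertices of $P$ would turn the pentagon edge $p_5p_1$ into a chord, a contradiction. So $P$ itself must be a complete component of $G-C(S)$, which in particular means $P$ shares no edge with any non-facial cycle of $G$ contained in $G-C(S)$.

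Next I would fix an arbitrary non-facial cycle $C$ of $G$ lying in $G-C(S)$ and count pentagons incident to it. By Proposition \ref{stucture-C-S}(i) each $P_2$-component of $G[C^1]\cup G[C^2]$ sits on a pentagon of $G$ that shares two edges with $C$, and by parts (ii)--(iii) the other faces of $G$ meeting $C$ are hexagons. Distinct $P_2$-components correspond to distinct pentagons, and each pentagon sharing an edge with $C$ arises from exactly one $P_2$-component, so the number of pentagons incident to $C$ is exactly $n_2$. The previous paragraph gives $n_2=0$, and then Proposition \ref{n2-n3-parity} tells us $|C|$ has the same parity as $n_2$, whence $|C|$ is even. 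Since $C$ was arbitrary, $G-C(S)$ has no odd non-facial cycle. The only subtlety, rather than a genuine obstacle, is verifying the bijection between $P_2$-components and pentagons incident to $C$; but this is essentially a re-reading of Proposition \ref{stucture-C-S}(i) already used inside the proof of Theorem \ref{even-odd-cycles}, so no new work is required.
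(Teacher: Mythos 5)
Your proof is correct and uses essentially the same ingredients as the paper: the correspondence between pentagons sharing edges with $C$ and the $P_2$-components counted by $n_2$ (Proposition \ref{stucture-C-S}), together with the parity statement of Proposition \ref{n2-n3-parity}. The paper phrases it contrapositively (an odd cycle forces $n_2\geq 1$, hence a pentagon containing a star center, contradicting type $P0$) while you argue directly that $P0$ forces $n_2=0$; this is the same argument run in the opposite direction, and your extra observation that each pentagon is a whole component of $G-C(S)$ is sound.
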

\begin{proof}
By the contrary, we suppose that $G-C(S)$ has a non-facial cycle $C$ of odd length.
By the Claim $1$ of Theorem \ref{even-odd-cycles}, $G$ has a pentagon $P$ which share edges with $C$.
This implies that $P$ contains the center of a star in $S$.
This contradicts that $S$ is of type $P0$.
So $G-C(S)$ does not have a non-facial cycle of odd length.
\end{proof}
In the above Corollary, we note that $G-C(S)$ may have non-facial cycles of even lengths (see Fig. \ref{examples}, the blue cycle in $C_{120}$).

Now, we point out the error of the Theorem $14$ in \cite{Doslic}.
\begin{thm}[\cite{Doslic}]\label{type-P0-Doslic}
A fullerene graph on $8n$ vertices has a perfect star packing of type $P0$ if and only if it arises from some other fullerene via the chamfer transformation.
\end{thm}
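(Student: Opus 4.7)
My plan is to split the biconditional into its two directions and attack them separately. For the ``if'' direction, start with a fullerene $G'$ on $m$ vertices, apply the chamfer transformation to obtain $cG'$, and directly exhibit a type-$P0$ perfect star packing on $cG'$. The natural choice is to take $V(G')$, the set of original vertices retained in $cG'$, as the set of star centers. Standard combinatorics of the chamfer gives $|V(cG')| = 4m$, each original vertex has three ``new'' neighbors in $cG'$, the $m$ stars so obtained partition $V(cG')$, and each original vertex lies only on the three hexagonal faces of $cG'$ that come from chamfering its three incident edges. In particular no star center lies on a pentagon, since all pentagons of $cG'$ are the ``shrunken'' versions of the $12$ original pentagons, whose vertices are all new. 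Since $m$ is even for every fullerene, $|V(cG')|=4m$ is automatically divisible by $8$.

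For the ``only if'' direction, given a fullerene $G$ on $8n$ vertices with a type-$P0$ packing $S$, try to reconstruct $G'$ with $cG'\cong G$. By the preceding corollary, $G-C(S)$ has no non-facial odd cycle, and by Lemma \ref{structuure-star} every center of $S$ has a local neighborhood of Type~$1$ or Type~$2$ (Type~$3$ is precluded by the $P0$ assumption). Set $V(G'):=C(S)$ and declare two centers adjacent in $G'$ exactly when they are antipodal on a common hexagonal face of $G$; this is precisely the relation the chamfer induces between images of adjacent original vertices. I would then need to verify that $G'$ is $3$-regular (each center sits on three hexagons of $G$ and, being locally of Type~$1$ or Type~$2$, produces three antipodal partners), that $G'$ inherits a plane embedding from that of $G$, that its faces are only pentagons and hexagons with exactly $12$ pentagons, and that $G'$ is $3$-connected.

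The main obstacle --- and, given the paper's explicit aim of pointing out an error in this theorem, presumably the step that actually fails --- is this reconstruction. Even with the strong local restrictions from type $P0$ and Lemma \ref{structuure-star}, the global pattern of $C(S)$ need not coincide with an orbit coming from any chamfer: the remark following the corollary above, noting non-facial even cycles in $G-C(S)$ for $C_{120}$, already hints that long resonant strips can be ``twisted'' relative to any chamfer template, so the putative edges of $G'$ may well fail to close up into a planar cubic graph with the required face structure. Rather than push the reconstruction further, my plan would pivot to enumerating small fullerenes of order $8n$ and inspecting their type-$P0$ packings, aiming to exhibit genuine counterexamples in which $G$ admits a type-$P0$ packing but is not the chamfer of any fullerene.
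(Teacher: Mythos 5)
Your instinct is exactly right, and it matches what the paper itself does with this statement: the paper does not prove Theorem \ref{type-P0-Doslic} but refutes its necessity, observing precisely the failure you isolate --- the graph obtained by joining star centers lying on a common hexagon of $G$ need not be cubic, $3$-connected, or even connected --- and it exhibits $C_{120}$, $C_{144}$ and $C_{384}$ as concrete counterexamples (each has a unique type-$P0$ packing yet arises from no chamfer), retaining only the sufficiency as the corrected theorem. Your sketch of the ``if'' direction (centers at the original vertices of $G'$, which lie only on the three edge-hexagons of $cG'$) agrees with the corrected statement. The one thing your proposal leaves undone is the final step of your own plan: actually producing the counterexamples, which is the substantive content of the paper's treatment here.
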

\begin{figure}[htbp!]
\centering
\includegraphics[height=13.5cm]{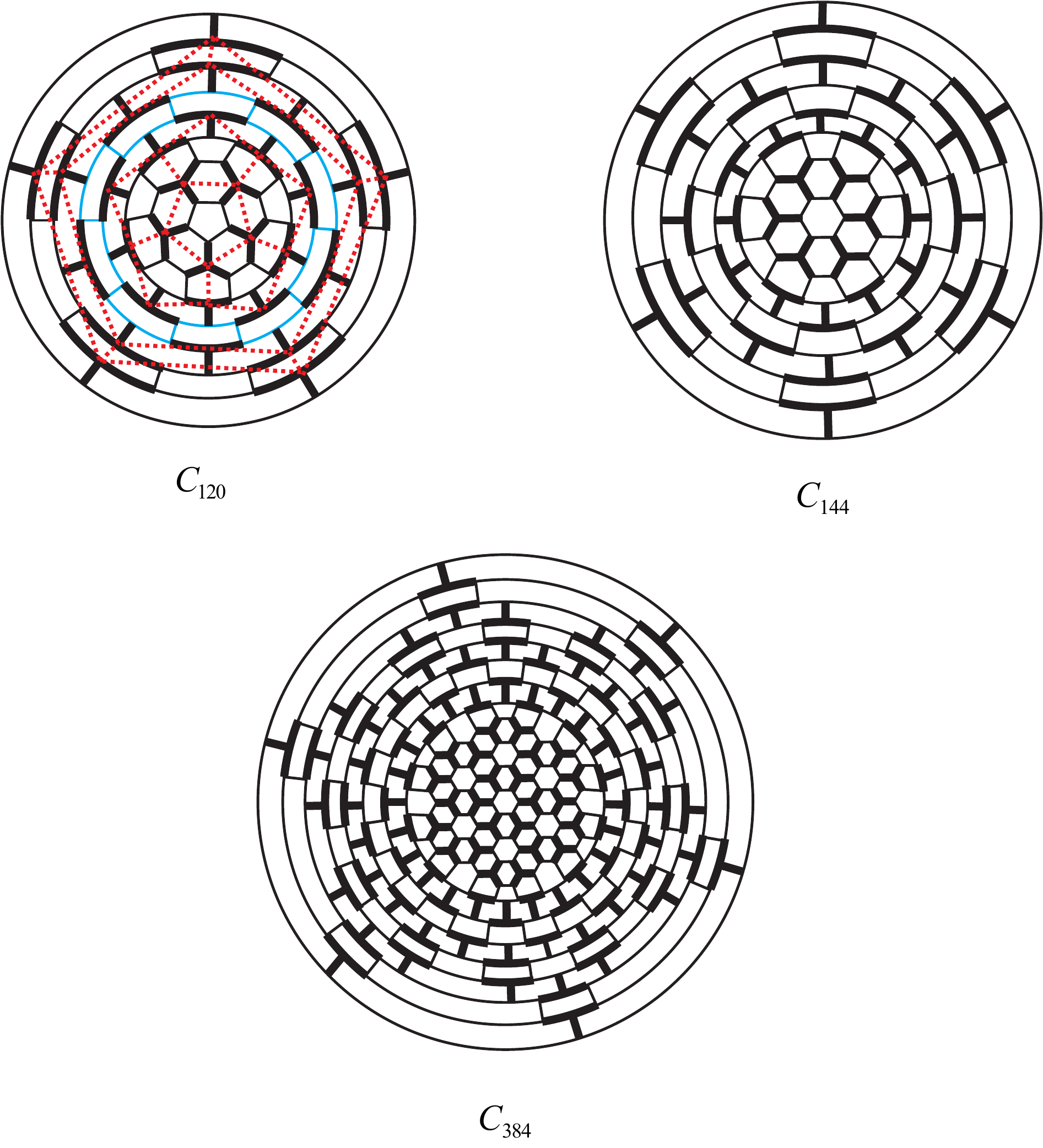}
\caption{\label{examples}{\small Each of $C_{120}, C_{144}, C_{384}$ has a unique perfect star packing of type $P0$ which is depicted in bold edges.}}
\end{figure}
In the proof of the necessity of this Theorem, there exist the following problem.
Take a fullerene graph with a perfect star packing of type $P0$. All star centers lie on vertices shared by three hexagons. When we connect the centers of stars lying on the same hexagons,
the resulting graph is planar, but does not have to be $3$-regular, $3$-connected and have only pentagonal and hexagonal faces.
For example, it is easy to check that
each of the fullerene graphs $C_{120}, C_{144}, C_{384}$ (see Fig. \ref{examples}) has a unique perfect star packing of type $P0$. When we connect
 the centers of stars lying on the same hexagons, the resulting graph (the red dashed line
in Fig. \ref{examples} is the resulting graph for $C_{120}$, and here we omit the resulting graphs for $C_{144}$ and $C_{384}$) is planar and is not connected.
In fact, the three fullerene graphs $C_{120}, C_{144}$ and $C_{384}$ as depicted in Fig. \ref{examples}
cannot arise from some other fullerene via the chamfer transformation.
So the necessity of this theorem does not hold, however, its sufficiency is right. It can be corrected as follows.
\begin{thm}
A fullerene graph that arises from some other fullerene via the chamfer transformation must have a perfect star packing of type $P0$.
\end{thm}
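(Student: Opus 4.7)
The plan is to produce an explicit perfect star packing of type $P0$ inside the chamfered fullerene. Suppose $cG$ is obtained from a fullerene $G$ on $n$ vertices by the chamfer transformation, and recall its structure: $cG$ has the $n$ vertices of $G$ together with two new vertices placed on each of the $3n/2$ edges of $G$, so $|V(cG)|=4n$; each original face of $G$ becomes a shrunken face of the same type in $cG$, bounded only by new vertices; and each original edge $uv$ of $G$ gives rise to a new hexagonal face of $cG$ having $u$ and $v$ as two of its six vertices. In particular, $cG$ is cubic, planar, and its $12$ pentagons are exactly the shrunken images of the $12$ pentagons of $G$.

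My proposed set of star centers is $S^{*}:=V(G)\subseteq V(cG)$. I would first verify three immediate consequences of the above description:
(i) no edge of $G$ survives as an edge of $cG$ (every edge is subdivided by two new vertices), hence $S^{*}$ is an independent set in $cG$;
(ii) each new vertex of $cG$ is adjacent to exactly one original vertex, namely the closer endpoint of the edge of $G$ on which it sits, so in particular every original vertex has all three of its $cG$-neighbors among the new vertices;
(iii) no vertex of $S^{*}$ lies on a pentagon of $cG$, since the pentagons of $cG$ are the shrunken pentagons of $G$ and their boundaries consist entirely of new vertices.

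I would then assemble the packing: for each $v\in S^{*}$, let $H_v$ be the star induced by $v$ and its three neighbors in $cG$. By (i), distinct centers are non-adjacent, so the stars have disjoint centers; by (ii), each new vertex of $cG$ is a leaf of exactly one such star. Hence $\{H_v:v\in S^{*}\}$ partitions the $n+3n=4n=|V(cG)|$ vertices of $cG$ into vertex-disjoint copies of $K_{1,3}$, i.e.\ it is a perfect star packing of $cG$, and by (iii) it is of type $P0$. Equivalently, using Theorem~\ref{characterization}, $S^{*}$ is an independent set whose deletion leaves only induced cycles (the boundaries of the shrunken faces, linked through the chamfer hexagons).

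The main obstacle, if any, is simply recording the chamfer transformation cleanly enough that (i)--(iii) become manifest; this is a purely local check at each vertex, edge, and face of $G$, or a direct reference to the Goldberg--Coxeter construction $GC(2,0)$. The rest of the argument is essentially the covering count and imposes no genuine difficulty, which is why the sufficiency direction of the original Theorem~\ref{type-P0-Doslic} of Do\v{s}li\'{c} et al.\ is correct even though its necessity fails.
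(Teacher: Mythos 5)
Your construction is correct: taking $S^{*}=V(G)\subseteq V(cG)$ as the set of centers does yield a perfect star packing of $cG$ (every original edge is subdivided, so $S^{*}$ is independent; every new vertex has exactly one original neighbour; the $12$ pentagons of $cG$ are the shrunken pentagons of $G$ and contain only new vertices), and the covering count $n+3n=4n=|V(cG)|$ closes the argument. Note that the paper itself supplies no proof of this theorem --- it merely observes that the sufficiency direction of Theorem~14 of Do\v{s}li\'{c} et al.\ is unaffected by the counterexamples to necessity and restates that direction --- so your explicit packing, which is the standard argument for that direction, fills in the details rather than departing from the paper's (implicit) approach.
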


From paper \cite{Doslic},
we know that fullerenes with two pentagons sharing an edge can not have a perfect star packing of type $P0$ since the edge shared by the two pentagons cannot lie in any star.
Next we list some other forbidden configurations whose presence in a fullerene graph precludes the existence of a perfect star packing of type $P0$.
\begin{figure}[htbp!]
\centering
\includegraphics[height=2.8cm]{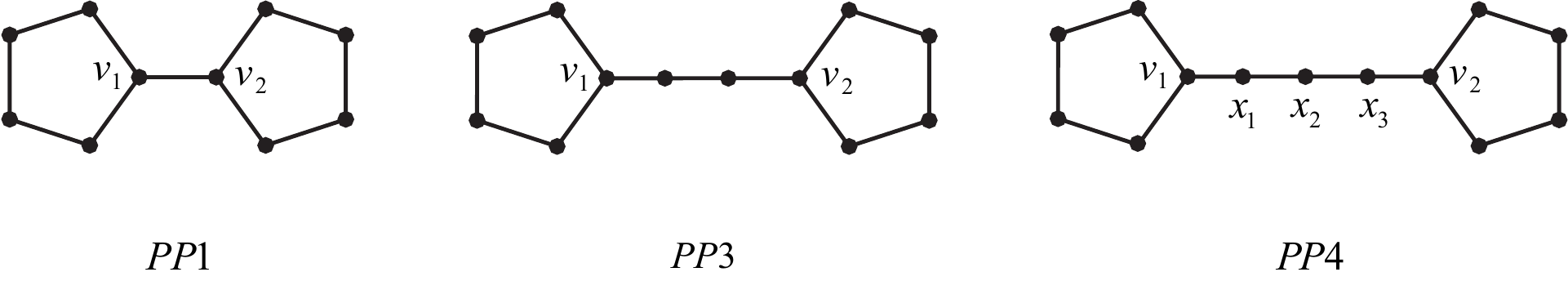}
\caption{\label{PP}{\small Three forbidden configurations.}}
\end{figure}
\begin{prop}
If a fullerene graph $G$ contains a subgraph $PP1, PP3$ or $PP4$ \emph{(}see Fig. \ref{PP}\emph{)}, then it cannot have a perfect star packing of type $P0$.
\end{prop}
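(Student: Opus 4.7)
The plan is to proceed by contradiction in each case: assume that $G$ contains the configuration and admits a perfect star packing $S$ of type $P0$, and then derive a contradiction from local constraints that $S$ must satisfy near the pentagons of the configuration.

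The first step is to extract a ``local principle'' for type $P0$ packings. Since no center lies on any pentagon, every vertex of every pentagon of $G$ is a leaf of $S$. Fix a pentagon $P$ and any vertex $v\in V(P)$. Its two neighbors on $P$ are also leaves, so by Remark \ref{remark1}(2), which says that each leaf has exactly one neighbor in $C(S)$ and exactly two neighbors that are leaves, the unique neighbor of $v$ lying off $P$ must belong to $C(S)$. I will call this the \emph{outward-center rule}: for every vertex on a pentagon, its third neighbor (outside the pentagon) is a center of a star in $S$.

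For $PP1$, which is the tightest of the three configurations (essentially two pentagons $P_1, P_2$ connected by a single edge $uv$ whose endpoints lie on different pentagons), the outward-center rule applied to $u\in V(P_1)$ instantly forces $v\in C(S)$; but $v\in V(P_2)$ is a pentagon vertex, contradicting type $P0$. For $PP3$ and $PP4$, which I expect to contain pentagons separated by a short hexagonal segment rather than by a single edge, the outward-center rule does not produce an immediate collision; instead, I would iterate. Label the vertices on each pentagon, mark their forced outward centers in $C(S)$, and then invoke Lemma \ref{structuure-star} to complete the Type 1/2/3 pattern around each forced center. Combined with Proposition \ref{1-hexagon} (a hexagon has at most two centers, and if two, they are antipodal) and Proposition \ref{1-pentagon} (a pentagon has at most one center, which in type $P0$ means none), the propagation rapidly becomes overdetermined: either two forced centers become adjacent (violating independence of $C(S)$), or a hexagon between the pentagons is required to carry two non-antipodal centers, or the Type 3 extension of some forced center drops another center onto one of the pentagons of the configuration.

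The main obstacle will be the bookkeeping in $PP3$ and $PP4$: the outward-center rule alone does not close the argument, so I need a systematic way to track, for each pentagon vertex, which outward neighbor is the forced center, and then extend each such center by the rigid Type 1/2/3 pattern of Lemma \ref{structuure-star} into its two other incident faces. The cleanest organization is to first note that in type $P0$ every center has all three incident faces hexagonal (Type 1 or Type 2), so the Type 3 picture in Fig.\ \ref{structure1}(c) is forbidden for any vertex in $C(S)$ arising this way; this already rules out many extensions and is what should ultimately collide with the specific local geometry of $PP3$ and $PP4$. I expect each of the two cases to reduce, after two or three applications of the outward-center rule and one application of Lemma \ref{structuure-star}, to either a vertex of a pentagon being forced to lie in $C(S)$, or to a Type 3 completion at a forced center, both contradicting type $P0$.
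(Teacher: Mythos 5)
Your proposal takes essentially the same approach as the paper: both rest on the observation that in a type $P0$ packing every pentagon vertex is a leaf, so its off-pentagon neighbor is forced into $C(S)$, and both then propagate forced center/leaf labels until a local conflict arises at the second pentagon. The only difference is one of economy: the paper reaches its contradiction using Remark \ref{remark1}(2) alone (after one application of the outward-center rule, the pentagon vertex $v_2$ is shown to have all three neighbors being leaves), without needing the heavier machinery of Lemma \ref{structuure-star} or Proposition \ref{1-hexagon} that you propose for $PP3$ and $PP4$.
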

\begin{proof}
By the contrary, we suppose that $G$ has a perfect star packing of type $P0$, denoted by $S$.
Clearly, the vertices $v_1$ and $v_2$ (see Fig. \ref{PP}) are leaves in $S$.
If $PP4$ is a subgraph of $G$, then $x_1$ is the center of a star in $S$ since all vertices on a pentagon are leaves in $S$.
So $x_2$ is a leaf in $S$.
By Remark \ref{remark1} 2, the neighbor $x_3$ of $x_2$ is also a leaf in $S$.
This implies that all the three neighbors of $v_2$ are leaves in $S$, a contradiction.
For subgraphs $PP1$ and $PP3$, we can similarly show that $v_2$ has all its three neighbors being leaves in $S$, a contradiction.
\end{proof}

\section{Acknowledgments}
 I would like to sincerely thank Wuyang Sun for his careful reading and valuable comments and suggestions.



\begin{thebibliography}{00}
\parskip -3pt
\bibitem{clar_fullerene} M.B. Ahmadi, E. Farhadi, V.A. Khorasani, On computing the Clar number of a fullerene using optimization techniques, MATCH Commun. Math. Comput. Chem. 75 (2016) 695--701.
\bibitem{Alg_hard} H.J. Alber, M.R. Fellows, R. Niedermeier, Polynomial-time data reduction for dominating set, J. ACM 51  (2004) 363--384.
\bibitem{pm_stability} S.J. Austin, P.W. Fowler, P. Hansen, D.E. Monolopoulos, M. Zheng, Fullerene isomers of $C_ {60}$. Kekul\'{e} counts versus stability, Chem. Phys. Lett. 228  (1994) 478--484.
\bibitem{Biggs} N. Biggs, Perfect codes in graphs, J. Combin. Theory Ser. B 15 (1973) 288--296.
\bibitem{scheduling} R. Bar-Yehuda, M. Halldorsson, J. Naor, H. Shachnai, I. Shapira, Scheduling split intervals, in Proceedings of Thirteenth Annual ACM-SIAM Symposium On Discrete Algorithms (2002) 732--741.
\bibitem{wirelesssensortracking} R. Bejar, B. Krishnamachari, C. Gomes, B. Selman, Distributed constraint satisfaction in a wireless sensor tracking system, in Workshop on Distributed Constraint Reasoning, International Joint Conference on Artiffcial Intelligence (2001).
\bibitem{resonant_hexagons_fullerene} J.A. Carr, X. Wang, D. Ye, Packing resonant hexagons in fullerenes, Discrete Optim. 13 (2014) 49--54.
\bibitem{Worst_case_effidom} I.J. Dejter, Worst-case efficient dominating sets in digraphs, Discrete Appl. Math. 161 (2003) 944--952.
\bibitem{effi_cayley} I.J. Dejter, O. Serra, Efficient dominating sets in cayley graphs, Discrete Appl. Math. 129 (2003) 319--328.
\bibitem{circulane_effidom} Y. Deng, Y. Sun, Q. Liu, H. Wang, Efficient dominating sets in circulant graphs, Discrete Math. 340 (2017) 1503--1507.
\bibitem{Diestel_book} R. Diestel, Graph Theory, Springer-Verlag, Heidelberg, 2005.
\bibitem{pms_lowbound} T. Do\v{s}li\'{c}, On lower bounds of number of perfect matchings in fullerene graphs, J. Math. Chem. 24  (1998) 359--364.
\bibitem{Doslic} T. Do\v{s}li\'{c}, M. Taheri-Dehkordi, G.H. Fath-Tabar, Packing stars in fullerenes, J. Math. Chem. 58  (2020) 2223--2244.
\bibitem{Packing_lines} A. Felzenbaum, Packing lines in a hypercube, Discrete Math. 117 (1993) 107--112.
\bibitem{fowler1995} P.W. Fowler, D.E. Manolopoulos, An Atlas of Fullerenes, Oxford University Press, Oxford, 1995; Dover Publications, Newyork, 2006 (a revised and corrected version).
\bibitem{maxiclar_fullerene} Y. Gao, Q. Li, H. Zhang, Fullerenes with the maximum Clar number, Discrete Appl. Math. 202 (2016) 58--69.
\bibitem{eigenvalue} C. Godsil, G. Royle, Algebraic graph theory, Springer, 2001.
\bibitem{Gru} B. Gr\"{u}nbaum, T. Motzkin, The number of hexagons and the simplicity of geodesics on certain polyhedra, Canad. J. Math. 15 (1963) 744--751.
\bibitem{pm_molecular_energy} I. Gutman, J.W. Kennedy, L.V. Quintas, Perfect matchings in random hexagonal chain graphs, J. Math. Chem. 6  (1991) 377--383.
\bibitem{clarstructure_F} E.J. Hartung, Fullerenes with complete Clar structure, Discrete Appl. Math. 161 (2013) 2952--2957.
\bibitem{codeoptimization} P. Hell, D. Kirkpatrick, On the complexity of a generalized matching problem, in Proceedings of Tenth ACM Symposium On Theory of Computing (1978) 309--31.
\bibitem{dom_ver_tran} J. Huang, J.M. Xu, The bondage numbers and efficient dominations of vertex-transitive graphs, Discrete Math. 308 (2008) 571--582.
\bibitem{Klein_Liu} D.J. Klein, X. Liu, Theorems for carbon cages, J. Math. Chem. 11 (1992) 199--205.
\bibitem{Kratochvil} J. Kratochvil, Perfect codes of graphs, J. Combin. Theory Ser. B 40 (1986) 224--228.
\bibitem{LZ18} H. Li, H. Zhang, The isolated-pentagon rule and nice substructures in fullerenes, ARS Math. CONTEM. 15 (2018) 487--497.
\bibitem{per_dominating} M. Livingston, Q.F. Stout, Perfect dominating sets, Congr. Numer. 79 (1990) 187--203.
\bibitem{packing_carbon_nanotube} A.A. Mutairi, B. Ali, P. Manuel, Packing in carbon nanotubes, J. Comb. Math. Comb. Comput. 92 (2015) 195--206.
\bibitem{hexagonal_network} A. Muthumalai, I. Rajasingh, A.S. Shanthi, Packing of hexagonal networks, J. Comb. Math. Comb. Comput. 79  (2011) 121--127.
\bibitem{family_packing} I. Rajasingh, A. Muthumalai, R. Bharati,  A.S. Shanthi, Packing in honeycomb networks, J. Math. Chem. 50 (2012) 1200--1209.
\bibitem{P3_packing} H.M.A. Siddiqui, M. Imran, Computation of metric dimension and partition dimension of nanotubes, J. Comput. Theor. Nanosci. 12 (2015) 199--203.
\bibitem{af_fullerene} Q. Yang, H. Zhang, Y. Lin, On the anti-forcing number of fullerene graphs, MATCH Commun. Math. Comput. Chem. 74 (2015) 681--700.
\bibitem{clar_upbound_F} H. Zhang, D. Ye, An upper bound for the Clar number of fullerene graphs, J. Math. Chem. 41 (2007) 123--133.
\bibitem{pms_n_lowbound} H. Zhang, F. Zhang, New lower bounds on the number of perfect matchings of fullerene graphs, J. Math. Chem. 30  (2001) 343--347.
\end{thebibliography}
\end{document}